\tikzset{->-/.style={decoration={
  markings,
  mark=at position #1 with {\arrow{>}}},postaction={decorate}}}
\DeclareMathOperator{\Gal}{Gal}
\DeclareMathOperator{\rank}{rank}
\DeclarePairedDelimiter\floor{\lfloor}{\rfloor}
\DeclareMathOperator{\Sel}{Sel}
\DeclareMathOperator{\coker}{coker}
\newtheorem{theorem}{Theorem}[section]
\newtheorem*{theorem*}{Theorem}
\newtheorem{lemma}[theorem]{Lemma}
\newtheorem{proposition}[theorem]{Proposition}
\newtheorem{corollary}[theorem]{Corollary}
\newtheorem{defn}[theorem]{Definition}
\numberwithin{equation}{section}
\newtheorem{lthm}{Theorem} 
\theoremstyle{remark}
\newtheorem{remark}[theorem]{Remark}
\newcommand\EatDot[1]{}
\newcommand{\ZZ}{\mathbb{Z}}
\definecolor{Green}{rgb}{0.0, 0.5, 0.0}
\newcommand{\Q}{\mathbb{Q}}
\newcommand{\Z}{\mathbb{Z}}
\newcommand{\rk}{\textup{rank}}
\newcommand{\rko}{\textup{rank}_{\mathcal{O}_K}}
  \DeclareFontFamily{U}{wncy}{}
  \DeclareFontShape{U}{wncy}{m}{n}{<->wncyr10}{}
  \DeclareSymbolFont{mcy}{U}{wncy}{m}{n}
  \DeclareMathSymbol{\sha}{\mathord}{mcy}{"58}
  \DeclareMathSymbol{\zhe}{\mathord}{mcy}{"11}
\renewcommand{\Im}{\textup{im}}
\title[Asymptotic forumla for Tate-Shafarevich groups]{An asymptotic formula for Tate-Shafarevich groups of CM elliptic curves at supersingular primes}
\author{Katharina M\"uller}
\address[Müller]{Institut für Theoretische Informatik, Mathematik und Operations Research, Universität der Bundeswehr München, Werner-Heisenberg-Weg 39, 85577 Neubiberg, Germany}
\email{katharina.mueller@unibw.de}
\subjclass[2020]{11R23, 11G05, 11G15}
\keywords{Iwasawa theory, Tate-Shafarevich groups, plus/minus Selmer groups}
\begin{document}
\maketitle
\begin{abstract}
    Let $K$ be an imaginary quadratic field and $E/\Q$ an elliptic curves with complex multiplication by $\mathcal{O}_K$. Let $K_\infty/K$ be the anticyclotomic $\Z_p$-extension of $K$ and $K_n$ the intermediate layers. Under additional assumptions on Kobayashi's signed Selmer groups we prove an asymptotic formula for $\sha(E/K_n)$. 
\end{abstract}
\section{Introduction}
Throughout this article $p\ge 5$ is a prime. Let $k$ be a number field and $k_\infty/k$ be a $\Z_p$-extension. Let $k_n$ be the unique subextension of degree $p^n$ and $h_n$ the $p$-class number of $k_n$. Iwasawa proved in his seminal paper \cite{iwasawa73} the asymptotic formula
\[v_p(h_n)=\mu p^n+\lambda n+\nu \quad n\gg 0,\]
for invariants $\mu,\lambda\ge 0$ and $\nu\in \ZZ$. 

In light of this key result the analysis of arithmetic objects along $\Z_p$-extensions has become a central topic in modern Iwasawa theory. Mazur \cite{mazur72} generalized Iwasawa's ideas and applied them to elliptic curves with good ordinary reduction at all primes above $p$. He showed that -- if the $p$-primary Selmer group over $k_\infty$ is cotorsion over the Iwasawa algebra of $\Gal(k_\infty/k)$-- there is an asymptotic formula
\[v_p(\vert \sha(E/k_n)\vert)=\mu p^n+\lambda n+\nu \quad n \gg 0,\]
where $\sha(E/k_n)$ is the Tate-Shafarevich group of $E$ over $k_n$ (assuming that it is finite) . A crucial step in his argument is a so called control theorem. For supersingular primes this control theorem is no longer valid. Let $E/\Q$ be an elliptic curve supersingular at $p$ and let $\Q_\infty$ be the cyclotomic $\Z_p$-extension of $\Q$. As $p\ge 5$, this implies that $a_p=0$ by the Hasse bound \cite[Chapter V, Theorem 1.1]{silverman}. Kobayashi \cite{kobayashi03} constructed plus/minus Selmer groups that satisfy a control theorem. From his control theorem he was able to derive the following asymptotic formula
\begin{align*}
    v_p(\vert \sha(E/\Q_n)\vert)=\sum_{k=0}^{\lfloor\frac{n-2}{2}\rfloor}p^{n-1-2k}-\lfloor \frac{n}{2}\rfloor+\lfloor \frac{n+1}{2}\rfloor \lambda^++\lfloor \frac{n+1}{2}\rfloor \lambda^-\\-nr+\sum_{k=1}^{\lfloor \frac{n}{2}\rfloor}\phi(p^{2k})\mu^++\sum_{k=1}^{\lfloor\frac{n+1}{2}\rfloor}\phi(p^{2k-1})\mu^-+\nu, \quad n\gg 0
\end{align*}
where $r=\rk(E(\Q_\infty))<\infty$ and $\phi$ denotes the Euler $\phi$-function. The invariants $\mu^\pm$ and $\lambda^\pm$ are the Iwasawa invariants of the Pontryagin duals of the  plus/minus Selmer groups. 

Instead of the cyclotomic $\Z_p$-extension we consider the anticyclotomic $\Z_p$-extension for the rest of the paper. We keep the assumption that $E/\Q$ is an elliptic curve and that $p$ is a supersingular prime. Let $K$ be an imaginary quadratic field. Let $K_\infty/K$ be the anticyclotomic $\Z_p$-extension, i.e. the $\Z_p$-extension on which $\Gal(K/\Q)$ acts as $-1$, and let $K_n$ be the unique subextension of degree $p^n$. Assume that $K$ satisfies the generalized Heegner hypothesis: Let $N=N_1N_2$ be the conductor of $E$, where $N_1$ and $N_2$ are coprime and $N_2$ is square-free. Assume that all primes dividing $pN_1$ are split in $K$ and that all primes dividing $N_2$ are inert in $K$.  In particular, we assume that $p$ splits in $K$. In this setting the rank of $E(K_n)$ is unbounded and the plus/minus Selmer groups are no longer cotorsion. Nevertheless there is-- under the assumption that $\sha(E/K_n)$ is finite and that the representation
\[\rho \colon G_\Q\to \textup{Aut}(T_p(E))\]
is surjective-- an asymptotic formula \cite{Leilimmuller}:
\begin{align*}v_p(\vert \sha(E/K_n)\vert)=\sum_{k\le n, k\textup{ even}}\mu^+\phi(p^k)+\sum_{k\le n, k\textup{ odd}}\mu^-\phi(p^k)\\+\lfloor \frac{n}{2}\rfloor \lambda^++\lfloor \frac{n+1}{2}\rfloor \lambda^-+\nu \quad n\gg 0.\end{align*}
These Iwasawa invariants are no longer the ones of the plus/minus Selmer groups.

The Heegner hypothesis excludes the case of CM elliptic curves with complex multiplication by $\mathcal{O}_K$ as primes of good supersingular reduction are inert in $K$. The aim of the present paper is to consider this case. Let $\varepsilon\in \{\pm 1\}$ be the root number of $E/\Q$. Then the $-\varepsilon$-Selmer group  is cotorsion while the $\varepsilon$-Selmer group is not. Burungale, Kobayashi and Ota \cite[Theorem 1.1]{BKO} prove that for all $n$ large enough such that $(-1)^n=-\varepsilon$ one has
\begin{align}
\label{bko}
    v_p(\vert \sha(E/K_n) \vert )-v_p(\vert\sha(E/K_{n-1})\vert)=\lambda+\mu \phi(p^n)
\end{align}
for some invariants $\mu,\lambda\ge 0$.

The invariants occuring in the asymptotic formula of Burungale-Kobayashi-Ota  come from the fine Selmer groups, the $-\varepsilon$ Selmer group, and a finitely generated $\Z_p$-module $A$ independent of $n$.

Let $\Lambda$ be the Iwasawa algebra of $\Gal(K_\infty/K)$ over the ring $\mathcal{O}$, the ring of integers of $K_p$, where $K_p$ denotes the completion of $K$ at $p$. . Define
\[\omega^+_n(T)=\prod_{1\le k\le n, k\textup{ even}}\Phi_k(T+1)\quad \omega_n^-=T\prod_{1\le k\le n,k \textup{ odd}}\Phi_k(T+1),\]
where $\Phi_k$ denotes the $p^k$-th cyclotomic polynomial.
Our main result covers the remaining steps:
\begin{lthm}
\label{Thm:a}
    Assume that $\sha(E/K_n)$ and the fine Selmer group $\Sel^0(E/K_n)[\omega_n^{-\varepsilon}]$ are finite for all $n$. Then for all $n$ large enough and such that $(-1)^n=\varepsilon$ one has 
    \[v_p(\vert \sha(E/K_n) \vert )-v_p(\vert\sha(E/K_{n-1})\vert)=\lambda+\mu \phi(p^n).\]
    The integers $\mu$ and $\lambda$ are the Iwasawa invariants of the fine Tate-Shafarevich groups. 
\end{lthm}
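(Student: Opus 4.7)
The plan is to route the computation through the fine Selmer group $\Sel^0(E/K_\infty)$, which, under the hypothesis that $\Sel^0(E/K_n)[\omega_n^{-\varepsilon}]$ is finite for all $n$, has Pontryagin dual that is $\Lambda$-torsion with well-defined Iwasawa invariants $\mu$ and $\lambda$. These invariants will be shown to govern $v_p(\vert\sha(E/K_n)\vert)$ on the parity $(-1)^n = \varepsilon$, despite the $\varepsilon$-signed Selmer group not being $\Lambda$-cotorsion.

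First I would compare $\sha(E/K_n)$ to the fine Tate-Shafarevich group $\sha^0(E/K_n)$. From the defining exact sequence
\[
0 \to \sha(E/K_n)[p^\infty] \to \Sel(E/K_n) \to E(K_n) \otimes \Qp/\Zp \to 0
\]
and its fine analogue, the difference $v_p(\vert\sha\vert) - v_p(\vert\sha^0\vert)$ is controlled by the local quotient $\Sel(E/K_n)/\Sel^0(E/K_n)$ embedded into $\widehat{E}(K_{n,p})\otimes\Qp/\Zp$, corrected by the Mordell-Weil ranks. The anticyclotomic CM setting gives a precise parity-dependent accounting of these corrections, and Kobayashi-style sequences express the local quotient in terms of the $\omega_n^\pm$-structure of the signed Selmer groups.

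Next, using the control theorem for $\Sel^0$ and the identity $\omega_n/\omega_{n-1}=\Phi_n(T+1)$ of degree $\phi(p^n)$, the structure theorem for torsion $\Lambda$-modules yields
\[
v_p(\vert\sha^0(E/K_n)\vert) - v_p(\vert\sha^0(E/K_{n-1})\vert) = \lambda+\mu\phi(p^n), \qquad n\gg 0.
\]
Combining this with the comparison above on the parity $(-1)^n=\varepsilon$, and using the BKO formula \eqref{bko} to constrain the behavior of the local discrepancy on the complementary parity, the desired formula follows.

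The main obstacle will be showing that on the parity $(-1)^n=\varepsilon$ the local and Mordell-Weil discrepancies between $\sha$ and $\sha^0$ contribute only bounded or exactly cancelling quantities to the successive difference. On the opposite parity these discrepancies carry genuine Iwasawa invariants (coming from the $-\varepsilon$-Selmer group and the auxiliary $\Zp$-module $A$ in BKO), so the asymmetry must be pinned down carefully via the theory of Coleman maps and local duality in the CM inert setting. Verifying this asymmetry, and ensuring that no hidden $\mu$ invariant from the non-cotorsion part of the $\varepsilon$-Selmer group leaks into the successive difference, is the technical heart of the argument.
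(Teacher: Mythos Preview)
Your proposal correctly identifies that the Iwasawa invariants of the fine Tate--Shafarevich group $\kappa^0(E/K_\infty)$ (your $\sha^0$) are the ones appearing in the formula, and that a control theorem for $\Sel^0$ plus the structure theorem gives the successive-difference formula for $\kappa^0$. That part matches the paper (Theorem~\ref{growth-of-kappa0}). But there is a genuine gap: you name the obstacle---showing that on the parity $(-1)^n=\varepsilon$ the discrepancy between $\sha$ and $\kappa^0$ contributes nothing to the successive difference---without supplying any mechanism to overcome it. Vague appeals to ``Kobayashi-style sequences'' and ``parity-dependent accounting'' do not constitute a proof, and the direct comparison $v_p(\vert\sha\vert)-v_p(\vert\kappa^0\vert)$ you propose is not easy to control without further structure.

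The paper's missing ingredient is an explicit decomposition at finite level. One defines signed Tate--Shafarevich groups $\kappa^\pm(E/K_n)=\Sel^\pm(E/K_n)/\mathcal{M}^\pm(E/K_n)$ and proves a short exact sequence
\[
0\to \kappa^0(E/K_n)\to \kappa^+(E/K_n)\oplus\kappa^-(E/K_n)\to \sha(E/K_n)\to 0,
\]
which at the level of successive quotients gives $\vert\sha_{n,n-1}\vert=\vert\kappa^{-\varepsilon}_{n,n-1}\vert$ once one shows $\kappa^0_{n,n-1}\cong\kappa^\varepsilon_{n,n-1}$. The latter isomorphism comes from the surjectivity of $\mathcal{M}^\varepsilon(E/K_n)\to\widehat{E}^\varepsilon(K_{n,p})\otimes\Qp/\Zp$, which in turn uses that $e_n\ge 1$ whenever $(-1)^n=\varepsilon$ (this is where the corank-one, non-cotorsion $\varepsilon$-Selmer group enters, benignly). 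The remaining inequality $\vert\kappa^{-\varepsilon}_{n,n-1}\vert\le\vert\kappa^0_{n,n-1}\vert$ is exactly where the hypothesis on $\Sel^0(E/K_n)[\omega_n^{-\varepsilon}]$ is used: since $\omega_n^{-\varepsilon}=\omega_{n-1}^{-\varepsilon}$ when $(-1)^n=\varepsilon$, a control theorem for $\Sel^{-\varepsilon}/\Sel^0$ forces $\Sel^{-\varepsilon}(E/K_n)=\Sel^{-\varepsilon}(E/K_{n-1})+\Sel^0(E/K_n)$, whence $\kappa^0_{n,n-1}\twoheadrightarrow\kappa^{-\varepsilon}_{n,n-1}$. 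Your outline never isolates this step, and without it there is no way to rule out the ``leakage'' you worry about in your last paragraph. (Minor point: your displayed exact sequence for $\sha$ has the arrows reversed; it should read $0\to E(K_n)\otimes\Qp/\Zp\to\Sel(E/K_n)\to\sha(E/K_n)[p^\infty]\to 0$.)
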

As an immediate corollary we obtain
\begin{lthm}
     Assume that $\sha(E/K_n)$ and $\Sel^0(E/K_n)[\omega_n^{-\varepsilon}]$ are finite for all $n$. Then for all $n$ large enough one has
     \begin{align*}&v_p(\vert\sha(E/K_n)\vert)\\&=\begin{cases}
        \mu^{-\varepsilon}\sum_{m\le n, (-1)^m=-\varepsilon}\phi(p^m)+\mu^{\varepsilon}\sum_{m\le n, (-1)^m=\varepsilon}\phi(p^m)\\{}+\lambda^\varepsilon\floor{\frac{n}{2}}+\lambda^{-\varepsilon}\floor{\frac{n-1}{2}}&(-1)^n=-\varepsilon\\
         \\\mu^{-\varepsilon}\sum_{m\le n, (-1)^m=-\varepsilon}\phi(p^m)+\mu^{\varepsilon}\sum_{m\le n, (-1)^m=\varepsilon}\phi(p^m)\\{}+\lambda^\varepsilon\floor{\frac{n-1}{2}}+\lambda^{-\varepsilon}\floor{\frac{n}{2}}&(-1)^n=\varepsilon
    \end{cases}\end{align*}
\end{lthm}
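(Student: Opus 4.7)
The plan is to combine two parallel one-step recurrences, one for each parity of $n$, and telescope. Theorem~\ref{Thm:a} provides
\[
v_p(\vert\sha(E/K_n)\vert)-v_p(\vert\sha(E/K_{n-1})\vert)=\lambda^\varepsilon+\mu^\varepsilon\phi(p^n)
\]
for $n$ sufficiently large with $(-1)^n=\varepsilon$, where I rename its invariants as $\lambda^\varepsilon,\mu^\varepsilon$. The Burungale--Kobayashi--Ota formula \eqref{bko} gives the analogous one-step identity, with invariants I call $\lambda^{-\varepsilon},\mu^{-\varepsilon}$, for $n$ sufficiently large with $(-1)^n=-\varepsilon$. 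After enlarging if necessary I may assume both identities are valid for all $m\ge N_0$.

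For $n\ge N_0$ I telescope
\[
v_p(\vert\sha(E/K_n)\vert)=v_p(\vert\sha(E/K_{N_0})\vert)+\sum_{m=N_0+1}^{n}\bigl(v_p(\vert\sha(E/K_m)\vert)-v_p(\vert\sha(E/K_{m-1})\vert)\bigr)
\]
and split the sum according to the parity of $m$, substituting the appropriate one-step recurrence in each class. Extending the range of summation down to $m=1$ alters the right-hand side by a bounded constant, which is absorbed into an implicit additive term $\nu$.

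What remains is bookkeeping. The $\mu$-part directly assembles into
\[
\mu^\varepsilon\sum_{\substack{1\le m\le n\\(-1)^m=\varepsilon}}\phi(p^m)+\mu^{-\varepsilon}\sum_{\substack{1\le m\le n\\(-1)^m=-\varepsilon}}\phi(p^m),
\]
while the $\lambda$-part reduces to $\lambda^\varepsilon\cdot\#\{m\le n:(-1)^m=\varepsilon\}+\lambda^{-\varepsilon}\cdot\#\{m\le n:(-1)^m=-\varepsilon\}$. A direct count of these two cardinalities produces expressions of the form $\lfloor n/2\rfloor$ or $\lfloor(n-1)/2\rfloor$, depending on whether $n$ has the same parity as $\varepsilon$ or the opposite one; matching these against the two cases of the statement is routine, with any off-by-one discrepancies reabsorbed into the constant term. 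There is no real obstacle; the only point requiring attention is the parity tracking of $n$ relative to $\varepsilon$, and the identification of the invariants in Theorem~\ref{Thm:a} and \eqref{bko} with the symbols $\mu^{\pm\varepsilon},\lambda^{\pm\varepsilon}$ used in the corollary.
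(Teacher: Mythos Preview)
Your proposal is correct and matches the paper's approach exactly: the paper also derives this result by combining the one-step recurrence from \cite[Theorem~1.1]{BKO} for $(-1)^n=-\varepsilon$ with Theorem~\ref{Thm:a} for $(-1)^n=\varepsilon$ and telescoping (indeed, the paper's own proof is the single sentence ``This is a direct consequence of \cite[Theorem 1.1]{BKO}, Theorem~\ref{main-thm}, Proposition~\ref{shainj} and Theorem~\ref{growth-of-kappa0}'', which is precisely Theorem~\ref{Thm:a} unpacked plus the BKO input). Your observation that an additive constant $\nu$ must be absorbed is well taken; the displayed formula in the statement appears to suppress it.
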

The invariants are the ones from \eqref{bko} and Theorem \ref{Thm:a} respectively. Note that one expects $\mu^\pm=0$ in this setting. 

\begin{remark} The condition that $ẞSel^0(E/K_n)[\omega_n^{-\varepsilon}]$ is finite is equivalent to the statement that the characteristic ideal of $\Sel^0(E/K_\infty)^\vee$ is corpime to $\omega_n^{-\varepsilon}$.

    If one assumes that $\sha(E/K_n)$ is finite, this is equivalent to
    \[f_n=\frac{\rank(E(K_n))-\rank(E(K_{n-1}))}{2\phi(p^n)}\le 1,\]
    for all $n$ such that $(-1)^n=-\varepsilon$. It is known that $f_n=0$ for all such $n$ large enough \cite{Gre_PCMS}.
\end{remark}
The central idea of the proof is to decompose $\sha(E/K_n)$ into plus and minus Tate-Shafarevich groups whose intersection is the fine Tate-Shafarevich group. Using control theorems for the respective Selmer groups we will then derive the above asymptotic formula. This approach differs from the one presented in \cite{BKO}. In \emph{loc. cit} the authors relate the growth of $\sha(E/K_n)$ to the cokernel of 
\[\Sel(E/K_{n-1})\to \Sel(E/K_n).\]
If $(-1)^n=-\varepsilon$ this cokernel is finite and compuatable in terms of Iwasawa invariants. In the case $(-1)^n=\varepsilon$, this cokernel is of corank $\phi(p^n)$ for all $n$. We have thus to apply different methods and need the additional assumption that $\Sel^0(E/K_n)[\omega_n^{-\varepsilon}]$ is finite for all $n$. 

The fine Tate-Shafarevich groups do not only play a central role in our proofs, but are also of independent interest and we are able to derive an asymptotic formula for them.
\begin{lthm}
\label{thm}
    Let $\kappa^0(E/K_n)$ be the fine Tate-Shafarevich group  of $E$ over $K_n$. For all $n\gg 0$ we have
    \[v_p(\vert \kappa^0(E/K_n)\vert)=\lambda n+p^n\mu +\nu,\]
    for $\mu,\lambda\ge 0$ and $\nu\in \Z$.
\end{lthm}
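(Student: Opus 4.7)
The plan is to transfer the question to one about the Iwasawa-theoretic behavior of the fine Selmer group over $K_\infty$ and then to invoke the classical Mazur-Iwasawa strategy. First I would verify that $X := \Sel^0(E/K_\infty)^\vee$ is a finitely generated torsion $\Lambda$-module. This follows from the containment $\Sel^0(E/K_\infty) \hookrightarrow \Sel^{-\varepsilon}(E/K_\infty)$, since the fine local conditions are strictly smaller than the Kobayashi $-\varepsilon$ ones, and the $-\varepsilon$-Selmer group is known to be cotorsion in the present setting. I would then prove a control theorem
\[
\Sel^0(E/K_n) \longrightarrow \Sel^0(E/K_\infty)[\omega_n]
\]
whose kernel and cokernel are bounded independently of $n$. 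The fine theory is well-suited for this step: the local condition at primes above $p$ is simply zero, reducing the local analysis there to a clean inflation-restriction computation whose terms are controlled by $H^i(\Gamma_n, \Ep^{G_{K_{\infty,v}}})$, while the primes away from $p$ are treated by the usual argument relying on the surjectivity of the residual Galois representation.

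With torsionness and the control theorem in hand, the structure theorem for finitely generated torsion $\Lambda$-modules produces a pseudo-isomorphism $X \sim \bigoplus_i \Lambda/(p^{a_i}) \oplus \bigoplus_j \Lambda/(f_j^{b_j})$, and a standard calculation yields
\[
v_p\bigl|\Sel^0(E/K_\infty)[\omega_n]\bigr| = \mu p^n + \lambda n + O(1),
\]
where $\mu = \sum_i a_i$ and $\lambda = \sum_j b_j \deg f_j$. Via the control theorem, the same asymptotic holds for $v_p |\Sel^0(E/K_n)|$ up to a bounded correction.

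Finally, I would transfer the asymptotic from the fine Selmer group to the fine Tate-Shafarevich group via the tautological exact sequence
\[
0 \to I_n \to \Sel^0(E/K_n) \to \kappa^0(E/K_n) \to 0,
\]
where $I_n$ is the image in $\Sel^0(E/K_n)$ of the Mordell-Weil contribution $E(K_n) \otimes \Qp/\Zp$. Since $\Sel^0(E/K_n)$ is finite by the previous paragraph, $I_n$ is automatically finite, and a separate Iwasawa-theoretic analysis---using the plus/minus decomposition of the formal group cohomology at the supersingular prime $p$ to pin down which Mordell-Weil classes have trivial local Kummer image---should yield an asymptotic of the same form $\mu' p^n + \lambda' n + O(1)$ for $|I_n|$; subtraction then gives the formula for $|\kappa^0(E/K_n)|$. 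The main obstacle is precisely this last step: because $\rk E(K_n)$ is unbounded in the anticyclotomic CM supersingular setting, the Mordell-Weil contribution $I_n$ is not obviously small, and controlling its growth requires a delicate analysis of local Kummer images at $p$, which is where the finiteness hypothesis on $\Sel^0(E/K_n)[\omega_n^{-\varepsilon}]$ (inherited from Theorem~\ref{Thm:a}) enters in an essential way.
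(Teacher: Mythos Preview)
Your overall strategy---compute an asymptotic for $\Sel^0(E/K_n)$, separately for $I_n=\mathcal{M}^0(E/K_n)$, and subtract---is the same skeleton the paper uses, but your execution has a genuine gap. You assert that torsionness of $\Sel^0(E/K_\infty)^\vee$ plus the control theorem gives $v_p|\Sel^0(E/K_n)|=\mu p^n+\lambda n+O(1)$, and hence that $\Sel^0(E/K_n)$ is finite. That implication fails: the characteristic ideal of $\Sel^0(E/K_\infty)^\vee$ can share factors with $\omega_n$, in which case $\Sel^0(E/K_\infty)[\omega_n]$ (and hence $\Sel^0(E/K_n)$) has positive $\Zp$-corank. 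In this anticyclotomic CM supersingular setting, where $\rk E(K_n)$ genuinely grows, this is not a pathology to be dismissed---both $\Sel^0(E/K_n)$ and $\mathcal{M}^0(E/K_n)$ can be infinite, while their quotient $\kappa^0(E/K_n)\hookrightarrow\sha(E/K_n)$ remains finite. Your final paragraph senses a difficulty with $I_n$ but misdiagnoses it as ``$I_n$ is finite but hard to bound,'' and then imports the extra hypothesis $\Sel^0(E/K_n)[\omega_n^{-\varepsilon}]$ finite from Theorem~\ref{Thm:a}; in fact Theorem~\ref{thm} needs only the standing hypothesis that $\sha(E/K_n)$ is finite.

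What the paper does instead: it first proves that the transition maps $\kappa^0(E/K_{n-1})\to\kappa^0(E/K_n)$ are injective, deducing this from the analogous statement for $\kappa^\varepsilon$ via the $\varepsilon$-Selmer group machinery (Corollary~\ref{cor.surj} and Lemma~\ref{lem:Phi_n-inj}). This injectivity is the key missing lemma in your sketch: it immediately yields a control theorem for $\mathcal{M}^0(E/K_n)\to\mathcal{M}^0(E/K_\infty)^{\Gamma_n}$ with bounded cokernel (Theorem~\ref{control-theorem}). One then compares the \emph{cokernels} of the transition maps for $\Sel^0$ and $\mathcal{M}^0$---both modules may be infinite, but the finiteness of $\sha(E/K_n)$ forces them to have the same $\Zp$-corank, so the factors $\gcd(F,\omega_n)$ coincide for the two characteristic ideals and drop out of the difference (Theorem~\ref{growth-of-kappa0}). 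This gives $v_p|\kappa^0_{n,n-1}|=\lambda+\mu\phi(p^n)$, and summing via the injectivity yields the claimed formula. Your ``subtract two asymptotics of the same form'' idea is morally right, but it only works once you know $\mathcal{M}^0$ satisfies its own control theorem, and that in turn rests on the injectivity of $\kappa^0$ along the tower---a step your proposal does not supply.
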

Note that Theorem \ref{thm} is a generalization of the results in \cite{Lim-control}. \emph{Loc. cit.} only conciders the cases of good ordinary reduction (Theorem 1.7) and of $E(K_\infty)$ being of finite rank (theorem 1.6). Both conditions are not satisfied for supersingular elliptic curves and the anticyclotomic $\Z_p$-extension. 

\section*{Acknowledgments}
The author would like to thank Ben Forrás, Antonio Lei, Meng Fai Lim and Andreas Nickel for helful comments on an earlier draft of this article.

\section{plus/minus Selmer groups}
Let $K$ be an imaginary quadratic field and let $p$ be prime that is inert in $K$. Let $E/\Q$ be an elliptic curve that has complex multiplication by $\mathcal{O}$. Let $K_\infty/K$ be the anticyclotomic $\Z_p$-extension and let $K_n$ be the intermediate fields. Let $\varepsilon$ be the root number of $E$. Let $\tau$ be a topological generator of $\Gal(K_\infty/K)$, let $T=\tau-1$ and $\Lambda=\mathcal{O}\llbracket T\rrbracket$. Throughout the paper we assume that $\sha(E/K_n)$ is finite for all $n$. 

Let $\Xi$ denote the set of Dirichlet characters of $\Gal(K_\infty/K)$. Let $\Xi^+$ be the subset of non-trivial characters whose order is an even power of $p$ and let $\Xi^-$ be the set of characters whose order is an odd power of $p$ and the trivial character. Let $K_{n,p}$ be the localization of $K_n$ at the unique prime above $p$ in $K_n$. For any character $\chi\in \Xi$ and any $x\in \widehat{E}(K_{n,p})$ we define
\[\lambda_\chi(x)=p^{-m}\sum_{\sigma\in \Gal(K_{m,p}/K_p)}\log(x^\sigma)\chi^{-1}(\sigma),\]
where $\chi$ is a character factoring through $\Gal(K_{m,p}/K)$ and $m\ge n$. Let 
\[\widehat{E}(K_{n,p})^\pm=\{x\in \widehat{E}(K_{n,p})\mid \lambda_\chi(x)=0\quad \forall \chi\in \Xi^\mp\}.\]

Let $\Sigma$ the set of primes dividing the conductor of $E$ and $p$. Let $K_\Sigma$ be the maximal Galois extension of $K$ unramified outside $\Sigma$. 
\begin{defn} 
We define
\begin{align*}\Sel(E/K_n)=\ker\left(H^1(K_\Sigma/K_n,E[p^\infty])\to \prod_{v\in \Sigma,(v,p)=1}H^1(K_{n,v},E[p^\infty])\times \frac{H^1(K_{n,p},E[p^\infty])}{\widehat{E}(K_{n,p})\otimes \Q_p/\Z_p}\right)\end{align*}
We define the plus/minus Selmer groups
\[\Sel^\pm(E/K_n)=\ker\left(H^1(K_\Sigma/K_n,E[p^\infty])\to \prod_{v\in \Sigma,(v,p)=1}H^1(K_{n,v},E[p^\infty])\times \frac{H^1(K_{n,p},E[p^\infty])}{\widehat{E}^\pm(K_{n,p})\otimes \Q_p/\Z_p}\right)\]
    as well as the fine Selmer group
    \[\Sel^0(E/K_n)=\ker\left(H^1(K_\Sigma/K_n,E[p^\infty])\to \prod_{v\in \Sigma}H^1(K_{n,v},E[p^\infty])\right).\]
    For $*\in \{0,+,-\}$ we define
    \[\mathcal{M}^*(E/K_n)=\Sel^*(E/K_n)\cap (E(K_n)\otimes \Q_p/\Z_p).\]
    The intersection is taken in $H^1(K_\Sigma/K_n,E[p^\infty])$ and $E(K_n)\otimes \Q_p/\Z_p$ is a subgroup after applying the Kummer map. 
   We furthemore define
    \[\kappa^*(E/K_n)=\frac{\Sel^*(E/K_n)}{\mathcal{M}^*(E/K_n)}.\]
    Let $\Sel^*(E/K_\infty)=\varinjlim_n\Sel^*(E/K_n)$.
\end{defn}
\begin{remark}
    By \cite{BKO24} Lemma 2.2 $H^1(K_{n,v},E[p^\infty])=0$ for all $v$ coprime to $p$. Thus, one can omit the conditions at primes away from $p$ in the definition of Selmer groups.
\end{remark}
In the following we will analyze the $\varepsilon$-Selmer groups.
\begin{lemma}
\label{lem-corank}
    $\left(\frac{\Sel^\varepsilon(E/K_\infty)}{\Sel^0(E/K_\infty)}\right)$ has $\Lambda$-corank one.
\end{lemma}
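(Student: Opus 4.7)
The plan is to exploit the fact that the local condition defining $\Sel^0$ at $p$ is strictly tighter than the one defining $\Sel^\varepsilon$, with the gap captured precisely by the $\varepsilon$-norm subgroup $\widehat{E}^\varepsilon(K_{\infty,p}) \otimes \Q_p/\Z_p$. Unwinding the definitions, the localization map at $p$ restricts on $\Sel^\varepsilon$ to a homomorphism with kernel $\Sel^0$ and image contained in the $\varepsilon$-norm subgroup, yielding a short exact sequence
\[
0 \longrightarrow \Sel^0(E/K_\infty) \longrightarrow \Sel^\varepsilon(E/K_\infty) \longrightarrow \widehat{E}^\varepsilon(K_{\infty,p}) \otimes \Q_p/\Z_p.
\]
The strategy is then to compute $\Lambda$-coranks in this sequence.

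For the upper bound $\corank_\Lambda(\Sel^\varepsilon/\Sel^0) \le 1$, I would appeal to the structure theory of the $\pm$-norm subgroups in the inert supersingular setting, which shows that $\widehat{E}^\varepsilon(K_{\infty,p}) \otimes \Q_p/\Z_p$ is a cofinitely generated $\Lambda$-module of corank exactly one. This is the analogue of Kobayashi's cyclotomic calculation, worked out for the CM anticyclotomic tower in \cite{BKO}.

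For the matching lower bound, first note that $\Sel^0 \subseteq \Sel^{-\varepsilon}$, and since $\Sel^{-\varepsilon}(E/K_\infty)$ is $\Lambda$-cotorsion by \cite{BKO}, the fine Selmer group $\Sel^0(E/K_\infty)$ is itself $\Lambda$-cotorsion. In the same reference it is shown that $\Sel^\varepsilon(E/K_\infty)$ has $\Lambda$-corank exactly one, the extra corank being contributed by the Heegner-type nontorsion classes along the tower. Additivity of $\Lambda$-corank in the displayed exact sequence then yields $\corank_\Lambda(\Sel^\varepsilon/\Sel^0) \ge 1$, completing the proof.

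The step requiring the most care is the local corank computation for $\widehat{E}^\varepsilon(K_{\infty,p}) \otimes \Q_p/\Z_p$: while standard in the cyclotomic Kobayashi framework, its transfer to the inert CM anticyclotomic setting relies on the appropriate Iovita--Pollack style analysis of the formal group at the inert prime $p$, together with the compatibility of the $\lambda_\chi$-functionals defined above with the trace structure on $\widehat{E}(K_{n,p})$. Once that local input is in hand, however, the argument above is purely formal.
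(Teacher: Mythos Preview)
Your proposal is correct and its core (the ``lower bound'' paragraph) is exactly the paper's proof: $\Sel^0(E/K_\infty)$ is $\Lambda$-cotorsion (the paper cites \cite[Proposition~3.4]{BKO24} directly rather than passing through $\Sel^{-\varepsilon}$) and $\Sel^\varepsilon(E/K_\infty)$ has $\Lambda$-corank one (the paper cites \cite[Theorem~3.6]{agboolahowardsupersingular}), so additivity in $0\to\Sel^0\to\Sel^\varepsilon\to\Sel^\varepsilon/\Sel^0\to 0$ gives the result. Since corank is \emph{exactly} additive in short exact sequences, your separate upper-bound argument via the local module $\widehat{E}^\varepsilon(K_{\infty,p})\otimes\Q_p/\Z_p$ is unnecessary for this lemma.
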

\begin{proof}
    By \cite[Proposition 3.4]{BKO24} $\Sel^0(E/K_\infty)$ is $\Lambda$-cotorsion. By \cite[Theorem 3.6]{agboolahowardsupersingular} $\Sel^\varepsilon(E/K_\infty)$ has $\Lambda$-corank one. Both results together imply the desired result. 
\end{proof}
\begin{lemma}
    $\left(\frac{\Sel^\varepsilon(E/K_\infty)}{\Sel^0(E/K_\infty)}\right)^\vee\cong \Lambda$. 
\end{lemma}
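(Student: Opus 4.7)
My plan is to realise $\left(\Sel^\varepsilon(E/K_\infty)/\Sel^0(E/K_\infty)\right)^\vee$ as an explicit quotient of $\Lambda$ and then promote the resulting surjection to an isomorphism using the corank information already available.

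First I would exploit the localization map at the prime above $p$. By inspection of the two Selmer conditions, a class in $\Sel^\varepsilon(E/K_n)$ vanishes at every $v\in\Sigma$ coprime to $p$ and has image at $p$ landing inside $\widehat{E}^\varepsilon(K_{n,p})\otimes \Q_p/\Z_p$ via the Kummer embedding; such a class lies in $\Sel^0(E/K_n)$ exactly when its local image at $p$ is also zero. Therefore localization at $p$ induces an injection
\[
\frac{\Sel^\varepsilon(E/K_n)}{\Sel^0(E/K_n)} \hookrightarrow \widehat{E}^\varepsilon(K_{n,p})\otimes \Q_p/\Z_p,
\]
and passing to the (exact) direct limit over $n$ produces the corresponding injection over $K_\infty$.

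The key local input is the identification of the target as a cofree $\Lambda$-module of corank one, i.e.
\[
\left(\widehat{E}^\varepsilon(K_{\infty,p})\otimes \Q_p/\Z_p\right)^\vee \cong \Lambda.
\]
This is the CM supersingular anticyclotomic analogue of Kobayashi's local structure theorem from \cite{kobayashi03}, and it underlies the structural results already cited in the proof of Lemma \ref{lem-corank}. Dualising the injection above then produces a surjection $\Lambda \twoheadrightarrow \left(\Sel^\varepsilon(E/K_\infty)/\Sel^0(E/K_\infty)\right)^\vee$.

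Finally I would invoke Lemma \ref{lem-corank}: the right-hand side has $\Lambda$-rank one, so the kernel of this surjection is a rank-zero submodule of $\Lambda$. Since $\Lambda$ is an integral domain, every nonzero submodule of $\Lambda$ has rank one, so the kernel must be trivial and the surjection is the desired isomorphism. The main obstacle is verifying the local cofreeness statement under the precise normalization of $\widehat{E}^\varepsilon(K_{\infty,p})$ used in this paper; once that input is secured, the remainder is purely formal manipulation of exact sequences together with the observation that $\Lambda$, being a domain, has no nonzero rank-zero submodules.
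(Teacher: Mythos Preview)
Your proposal is correct and follows essentially the same route as the paper: the paper likewise uses the injection $\Sel^\varepsilon(E/K_\infty)/\Sel^0(E/K_\infty)\hookrightarrow \widehat{E}^\varepsilon(K_{\infty,p})\otimes\Q_p/\Z_p$, dualises using the local cofreeness $(\widehat{E}^\varepsilon(K_{\infty,p})\otimes\Q_p/\Z_p)^\vee\cong\Lambda$ (obtained from \cite[Theorem~3.2 and Lemma~3.3]{BKO24} by passing to the limit), and then invokes Lemma~\ref{lem-corank} to upgrade the surjection from $\Lambda$ to an isomorphism. Your explicit remark that $\Lambda$ is a domain, so its only rank-zero submodule is trivial, is exactly the implicit step behind the paper's final sentence.
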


\begin{proof}
    By \cite[Theorem 3.2 and Lemma 3.3]{BKO24} we have
    \[(E^\pm(K_{n,p})\otimes \Q_p/\Z_p)^\vee\cong \omega^{\mp}_n\Lambda_n,\]
where $\Lambda_n=\mathcal{O}[\Gal(K_n/K)]$.    Taking the projective limit, we obtain 
    \[(E^\pm(K_{\infty,p})\otimes \Q_p/\Z_p)^\vee\cong \Lambda.\]
    By definition 
    \[\left (\frac{\Sel^\varepsilon(E/K_\infty)}{\Sel^0(E/K_\infty)}\right)\hookrightarrow E^\varepsilon(K_\infty)\otimes \Q_p/\Z_p. \]
    This implies that we have a natural surjection 
    \[\Lambda \to \left (\frac{\Sel^\varepsilon(E/K_\infty)}{\Sel^0(E/K_\infty)}\right)^\vee.\]
    As the latter module has $\Lambda$-rank one by Lemma \ref{lem-corank}, this map is actually an isomorphism. 
\end{proof}
As an immediate corollary we obtain
\begin{corollary} The natural map
    \[\Sel^\varepsilon(E/K_\infty)\to E^\varepsilon(K_{\infty,p})\otimes \Q_p/\Z_p\]
    is a surjection. 
\end{corollary}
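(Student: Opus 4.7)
The strategy is to read the corollary off the previous lemma by carefully tracking Pontryagin duals. By the definition of the fine Selmer group, together with the vanishing $H^1(K_{n,v}, E[p^\infty]) = 0$ for all $v \in \Sigma$ coprime to $p$ recorded in the remark, the kernel of the localization map $\Sel^\varepsilon(E/K_\infty) \to E^\varepsilon(K_{\infty,p}) \otimes \Q_p/\Z_p$ is precisely $\Sel^0(E/K_\infty)$. Hence this map factors as a composition
\[
\Sel^\varepsilon(E/K_\infty) \twoheadrightarrow \frac{\Sel^\varepsilon(E/K_\infty)}{\Sel^0(E/K_\infty)} \hookrightarrow E^\varepsilon(K_{\infty,p}) \otimes \Q_p/\Z_p,
\]
where the first arrow is the canonical quotient (manifestly surjective) and the second arrow is the injection already exploited in the preceding lemma.

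It therefore suffices to upgrade the second arrow from an injection to an isomorphism. Dualizing it produces a surjection of $\Lambda$-modules
\[
\bigl(E^\varepsilon(K_{\infty,p}) \otimes \Q_p/\Z_p\bigr)^\vee \twoheadrightarrow \left(\frac{\Sel^\varepsilon(E/K_\infty)}{\Sel^0(E/K_\infty)}\right)^{\!\vee}.
\]
The source is isomorphic to $\Lambda$ by the explicit computation $(E^\pm(K_{\infty,p}) \otimes \Q_p/\Z_p)^\vee \cong \Lambda$ obtained by passing to the limit of the identifications in \cite{BKO24}, and the target is isomorphic to $\Lambda$ by the previous lemma. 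Since any surjective $\Lambda$-linear endomorphism of the finitely generated module $\Lambda$ over the Noetherian ring $\Lambda$ is automatically an isomorphism, the displayed surjection is in fact an isomorphism. Applying Pontryagin duality once more upgrades the inclusion $\Sel^\varepsilon/\Sel^0 \hookrightarrow E^\varepsilon \otimes \Q_p/\Z_p$ to an isomorphism, and composing with the quotient $\Sel^\varepsilon \twoheadrightarrow \Sel^\varepsilon/\Sel^0$ yields the desired surjection.

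There is essentially no obstacle to this argument; the content is entirely contained in the previous lemma, and the only small point to verify carefully is the identification of the kernel of the localization map with $\Sel^0(E/K_\infty)$, which relies on the vanishing of $H^1$ at primes away from $p$ recorded in the remark.
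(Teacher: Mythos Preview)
Your proof is correct and follows essentially the same route as the paper. In fact, the proof of the preceding lemma already shows that the \emph{specific} surjection $(E^\varepsilon(K_{\infty,p})\otimes\Q_p/\Z_p)^\vee \twoheadrightarrow (\Sel^\varepsilon/\Sel^0)^\vee$ (the dual of the inclusion) is an isomorphism, not merely that the target is abstractly isomorphic to $\Lambda$; so your Noetherian surjective-endomorphism step, while valid, is a slight re-derivation of what was just proved, and the paper accordingly records the corollary without further argument.
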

For all $n\ge 0$ we define
\[e_n=\frac{\rko(E(K_n))-\rko(E(K_{n-1}))}{ \phi(p^n)} \]
\begin{lemma}
    Assume that $(-1)^n=\varepsilon$. Then $e_n\ge 1$.
\end{lemma}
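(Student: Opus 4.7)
The plan is to reduce the problem to the growth of $\rko(\Sel^\varepsilon(E/K_n))$ via $\sha$-finiteness, and bound this below using the preceding corollary together with a control theorem for plus/minus Selmer groups.

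First, I would observe that the composition $\Sel^\varepsilon(E/K_n)\hookrightarrow \Sel(E/K_n)\twoheadrightarrow \sha(E/K_n)[p^\infty]$ has kernel exactly $\mathcal{M}^\varepsilon(E/K_n)$, so the standing finiteness assumption on $\sha(E/K_n)$ forces $\kappa^\varepsilon(E/K_n)$ to be finite. Hence $\rko(\mathcal{M}^\varepsilon(E/K_n))=\rko(\Sel^\varepsilon(E/K_n))$, and since $\mathcal{M}^\varepsilon(E/K_n)\subset E(K_n)\otimes \Q_p/\Z_p$, this yields the lower bound
\[\rko(E(K_n))\ge \rko(\Sel^\varepsilon(E/K_n)).\]

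Next, set $X^\varepsilon:=\Sel^\varepsilon(E/K_\infty)^\vee$ and $Y^0:=\Sel^0(E/K_\infty)^\vee$. Dualizing the surjection from the preceding corollary and invoking the identification $(\Sel^\varepsilon(E/K_\infty)/\Sel^0(E/K_\infty))^\vee\cong \Lambda$ produces a short exact sequence $0\to \Lambda\to X^\varepsilon\to Y^0\to 0$ with $Y^0$ being $\Lambda$-torsion (by Lemma \ref{lem-corank}). A control theorem for $\Sel^\varepsilon$ in the anticyclotomic supersingular CM setting (as developed in \cite{kobayashi03} and subsequently adapted) identifies $\Sel^\varepsilon(E/K_n)^\vee$ with $X^\varepsilon/\omega_n X^\varepsilon$ up to finite error. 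Applying the snake lemma modulo $\omega_n$ to the short exact sequence above and using that the $\Lambda$-torsion module $Y^0$ contributes only bounded terms to the Euler characteristic, one obtains $\rko(\Sel^\varepsilon(E/K_n))=p^n+O(1)$.

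For the upper bound, I would use $\rko(E(K_{n-1}))=\rko(\Sel(E/K_{n-1}))$ together with a near-decomposition $\Sel(E/K_{n-1})=\Sel^\varepsilon(E/K_{n-1})+\Sel^{-\varepsilon}(E/K_{n-1})$ with intersection $\Sel^0(E/K_{n-1})$, coming from $\widehat{E}^\varepsilon+\widehat{E}^{-\varepsilon}=\widehat{E}$ locally (up to torsion). Since $\Sel^{-\varepsilon}(E/K_\infty)$ is $\Lambda$-cotorsion, its contribution at level $n-1$ is of lower order, giving $\rko(E(K_{n-1}))=p^{n-1}+O(1)$. Combining yields $\rko(E(K_n))-\rko(E(K_{n-1}))\ge p^n-p^{n-1}+O(1)=\phi(p^n)+O(1)$, hence $e_n\ge 1$ for all $n$ with $(-1)^n=\varepsilon$ sufficiently large.

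The main obstacle is upgrading this to the clean bound $e_n\ge 1$ for all such $n$, rather than only asymptotically: this requires carefully controlling the $O(1)$ error terms from the control theorem and from the $\widehat{E}^\pm$-decomposition. The most promising route is to exploit the \emph{explicit} $\Lambda$-submodule embedding $\Lambda\hookrightarrow X^\varepsilon$ furnished by the corollary (not only its pseudo-isomorphism class), and use this to guarantee that the torsion contribution from $Y^0$ is non-negative rather than potentially negative, so that the leading term $\phi(p^n)$ is never diminished.
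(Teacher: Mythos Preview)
Your argument has a genuine circularity in the upper-bound step. To cap $\rko(E(K_{n-1}))$ you invoke a decomposition $\Sel(E/K_{n-1})=\Sel^\varepsilon(E/K_{n-1})+\Sel^{-\varepsilon}(E/K_{n-1})$, claiming it follows from the local splitting $\widehat{E}=\widehat{E}^+ +\widehat{E}^-$. It does not: if $I$ denotes the image of $\Sel(E/K_{n-1})$ in $\widehat{E}(K_{n-1,p})\otimes\Q_p/\Z_p$, then $\Sel^+ + \Sel^-$ equals $\Sel$ only when $I=(I\cap\widehat{E}^+)+(I\cap\widehat{E}^-)$, and a ``diagonal'' $I$ shows this can fail. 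In the paper the decomposition is Proposition~\ref{prop-selmer-exact}, whose proof rests on the surjectivity in Corollary~\ref{cor.surj}, which in turn \emph{uses the present lemma} (it needs $e_m\ge 1$ for all $m$ with $(-1)^m=\varepsilon$). Without an independent source for this surjectivity, your upper bound on $\rko(E(K_{n-1}))$ is unjustified, and the subtraction collapses.

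The paper avoids subtracting ranks altogether by working with the Galois-module structure. Over $\Q_p$ the ring $\Q_p[\Gal(K_n/K)]$ is semisimple, so $E(K_n)\otimes\Q_p$ breaks into $\Phi_k$-isotypic pieces. The control theorem $\Sel^\varepsilon(E/K_n)[\omega_n^\varepsilon]\hookrightarrow \Sel^\varepsilon(E/K_\infty)[\omega_n^\varepsilon]$ with finite cokernel, together with the free quotient $\Lambda^\vee$ of $\Sel^\varepsilon(E/K_\infty)$, forces $\Sel^\varepsilon(E/K_n)^\vee\otimes\Q_p$ to contain a copy of $\Lambda/(\omega_n^\varepsilon/\omega_{n-1}^\varepsilon)\otimes\Q_p=\Q_p[T]/\Phi_n(T+1)$; by $\sha$-finiteness, so does $E(K_n)\otimes\Q_p$. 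Since $E(K_{n-1})$ is annihilated by $\omega_{n-1}$, its $\Phi_n$-component is \emph{exactly zero}, and $e_n\ge 1$ follows at once for every $n$ with $(-1)^n=\varepsilon$. You correctly sensed that your route leaves an $O(1)$ deficit; the missing idea is precisely this passage to the $\Phi_n$-isotypic component, which replaces ``bounded'' by ``zero'' on the $K_{n-1}$ side and removes any need to estimate $\rko(E(K_{n-1}))$ at all.
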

\begin{proof}
    By \cite[Theorem 5.2]{agboolahowardsupersingular} there is an injective homomorphism with finite cokernel
    \[\Sel^\varepsilon(E/K_n)[\omega_n^\varepsilon]\to \Sel^\varepsilon(E/K_\infty)[\omega_n^\varepsilon].\]
    By Lemma \ref{lem-corank} $\Sel^\varepsilon(E/K_\infty)$ has a quotient that is isomorphic to $\Lambda^\vee$. It follows that $\Sel^\varepsilon(E/K_n)^\vee\otimes \Q_p$ contains a submodule iomorphic to $\Lambda/(\omega_n^\varepsilon/\omega_{n-1}^\varepsilon)\otimes \Q_p$. As the Tate-Shafarevich group is assumed to be finite for all $n$, this implies that $E(K_n)\otimes \Q_p$ contains a submodule isomorphic to $\Lambda/(\omega_n^\varepsilon/\omega_{n-1}^\varepsilon)\otimes \Q_p$. As $E(K_{n-1})$ is annihilated by $\omega_{n-1}$, we obtain the desired result.  
\end{proof}
\begin{corollary}
\label{cor.surj}
    The natural homomorphisms
    \[\Sel^\varepsilon (E/K_n)\to \widehat{E}^\varepsilon(K_{n,p})\otimes \Q_p/\Z_p\]
    and 
    \[\mathcal{M}^\varepsilon (E/K_n)\to \widehat{E}^\varepsilon(E)(K_{n,p})\otimes \Q_p/\Z_p\]
    are surjective. In particular,
    \[\frac{\Sel^\varepsilon(E/K_n)}{\Sel^\varepsilon(E/K_{n-1})+\Sel^0(K_n)}\cong \frac{\widehat{E}^\varepsilon(K_{n,p})\otimes \Q_p/\Z_p}{\widehat{E}^\varepsilon(K_{n-1,p})\otimes \Q_p/\Z_p}\cong \frac{\mathcal{M}^\varepsilon(E/K_n)}{\mathcal{M}^\varepsilon(E/K_{n-1})+\mathcal{M}^0(K_n)}\]
\end{corollary}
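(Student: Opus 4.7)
My plan is to deduce the first surjection from the second — since $\mathcal{M}^\varepsilon(E/K_n)\subseteq\Sel^\varepsilon(E/K_n)$ and both localise into the same target — and to establish the surjection for $\mathcal{M}^\varepsilon$ by induction on $n$, splitting the inductive step according to the parity of $n$. The base case is handled by direct inspection, using that $\widehat{E}^\varepsilon(K_p)\otimes\Q_p/\Z_p$ has small $\mathcal{O}$-corank and that the finiteness of $\sha$ forces $\mathcal{M}^\varepsilon$ to be sufficiently large.

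When $(-1)^n=-\varepsilon$, the identification $(\widehat{E}^\varepsilon(K_{m,p})\otimes\Q_p/\Z_p)^\vee\cong\Lambda/(\omega_m^\varepsilon)$ from the lemma above, combined with the equality $\omega_n^\varepsilon=\omega_{n-1}^\varepsilon$ in this parity, shows that the natural inclusion $\widehat{E}^\varepsilon(K_{n-1,p})\otimes\Q_p/\Z_p\hookrightarrow\widehat{E}^\varepsilon(K_{n,p})\otimes\Q_p/\Z_p$ is an equality; the inductive surjectivity at level $n-1$ together with the map $\mathcal{M}^\varepsilon(E/K_{n-1})\to\mathcal{M}^\varepsilon(E/K_n)$ then yields the claim. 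When $(-1)^n=\varepsilon$, the preceding lemma provides a $\Lambda$-submodule of $E(K_n)\otimes\Q_p$ isomorphic to $\Lambda/(\omega_n^\varepsilon/\omega_{n-1}^\varepsilon)\otimes\Q_p$, whose $\mathcal{O}$-rank $\phi(p^n)$ matches precisely the growth in $\mathcal{O}$-corank of the local target between levels $n-1$ and $n$. Under the assumption that $\sha(E/K_n)$ is finite, the Kummer images of these new classes furnish elements of $\mathcal{M}^\varepsilon(E/K_n)$ whose localisations at $p$ fill the new part of $\widehat{E}^\varepsilon(K_{n,p})\otimes\Q_p/\Z_p$; combined with the inductive surjectivity at level $n-1$, this gives surjectivity at level $n$.

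For the ``in particular'' isomorphism I would run a short diagram chase. Since $\Sel^0(E/K_n)$ maps to zero at $p$ by definition, and since inductively $\Sel^\varepsilon(E/K_{n-1})$ surjects onto $\widehat{E}^\varepsilon(K_{n-1,p})\otimes\Q_p/\Z_p$, the level-$n$ surjection descends to a surjection of the displayed quotients. Injectivity follows because any class in $\Sel^\varepsilon(E/K_n)$ whose localisation at $p$ lies in $\widehat{E}^\varepsilon(K_{n-1,p})\otimes\Q_p/\Z_p$ can, by surjectivity at level $n-1$, be corrected by an element of $\Sel^\varepsilon(E/K_{n-1})$ so that the corrected class becomes trivial at $p$, hence lies in $\Sel^0(E/K_n)$; the same reasoning with $\mathcal{M}^\bullet$ in place of $\Sel^\bullet$ gives the final isomorphism. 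The main obstacle I foresee lies in the inductive step for $(-1)^n=\varepsilon$: one must ensure that the ``new'' Kummer classes really land in the $\varepsilon$-part of the local cohomology rather than the $(-\varepsilon)$-part. This should be traceable through the isomorphism $(\Sel^\varepsilon(E/K_\infty)/\Sel^0(E/K_\infty))^\vee\cong\Lambda$ established earlier together with the Agboola--Howard control theorem, which force the additional rank to sit in the correct $\varepsilon$-component at $p$.
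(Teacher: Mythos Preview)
Your inductive scheme is workable but considerably more elaborate than what the paper does, and the obstacle you flag is not the genuine one. The paper argues directly at level $n$ with no induction: since $e_m\ge 1$ for every $m\le n$ with $(-1)^m=\varepsilon$, the $\Q_p[\Gal(K_n/K)]$-module $E(K_n)\otimes\Q_p$ contains a subrepresentation isomorphic to $\Q_p[X]/\omega_n^\varepsilon$. Because the localisation $E(K_n)\otimes\Q_p\to\widehat{E}(K_{n,p})\otimes\Q_p$ is injective (a point in $E(K_n)$ that becomes torsion in $E(K_{n,p})$ is already torsion) and $\widehat{E}(K_{n,p})\otimes\Q_p\cong\Q_p[X]/\omega_n$ is multiplicity-free, this subrepresentation must map isomorphically onto $\widehat{E}^\varepsilon(K_{n,p})\otimes\Q_p$. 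Hence both maps in the statement have finite cokernel, and divisibility of the target upgrades this to surjectivity. No parity split, no induction, no control theorem.

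In your argument the worry ``do the new classes land in the $\varepsilon$-part rather than the $(-\varepsilon)$-part'' is a non-issue: when $(-1)^n=\varepsilon$ one has $\omega_n^{-\varepsilon}=\omega_{n-1}^{-\varepsilon}$, so the $(-\varepsilon)$-component does not grow and any class annihilated by $\Phi_n(T+1)$ is automatically in the $\varepsilon$-part. The actual gap in your inductive step is different: why do the new Mordell--Weil classes localise \emph{nontrivially} at $p$ (rather than mapping to zero)? Your proposed detour through the isomorphism $(\Sel^\varepsilon/\Sel^0)^\vee\cong\Lambda$ and Agboola--Howard control can indeed close this---it yields finite cokernel for the $\Sel^\varepsilon$-map, and then finiteness of $\sha$ transfers this to $\mathcal{M}^\varepsilon$ since $\Sel^\varepsilon/\mathcal{M}^\varepsilon\hookrightarrow\sha(E/K_n)$ is finite and the target is divisible---but the paper's route via injectivity of the rational localisation is both shorter and more transparent. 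Your diagram chase for the ``in particular'' isomorphisms is correct and is essentially what the paper leaves implicit.
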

\begin{proof}
    By \cite[theorem 3.2 (1)]{BKO24} $\widehat{E}^\varepsilon(K_{n,p})\otimes \Q_p\cong \Q_p[X]/\omega_n^\varepsilon$. As $e_n\ge 1$ for all $n$ with $(-1)^n=\varepsilon$, $E(K_n)\otimes \Q_p$ contains a subrepresentation isomorphic to $\Q_p[X]/\omega_n^\varepsilon$. 
    Thus, $\widehat{E}^\varepsilon(K_{n,p})\otimes \Q_p$ lies in the image of the natural homomorphism
    \[E(K_n)\otimes \Q_p\to \widehat{E}(K_{n,p})\otimes \Q_p.\]
    It follows that
    \[\Sel^\varepsilon(E/K_n)\to \widehat{E}^\varepsilon(K_{n,p})\otimes \Q_p/\Z_p\]
    and 
    \[\mathcal{M}^\varepsilon(E/K_n)\to \widehat{E}^\varepsilon(K_{n,p})\otimes \Q_p/\Z_p\]
    have finite cokernels. As the right hand site is divisible, the homomorphism has to be surjective.    
\end{proof}
The remainder of the section is dedicated to prove the existence of a short exact sequence
\[0\to \kappa^0(E/K_n)\to \kappa^+(E/K_n)\oplus \kappa^-(E/K_n)\to \sha(E/K_n)\to 0.\]
\begin{proposition}
\label{prop-selmer-exact}
    We have two short exact sequences
    \[0\to \Sel^0(E/K_n)\to \Sel^+(E/K_n)\oplus \Sel^-(E/K_n)\to \Sel(E/K_n)\to 0\]
    and 
    \[0\to \mathcal{M}^0(E/K_n)\to \mathcal{M}^+(E/K_n)\oplus \mathcal{M}^-(E/K_n)\to E(K_n)\otimes \Q_p/\Z_p\to 0\]
\end{proposition}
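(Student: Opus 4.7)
The plan is to deduce both exact sequences in parallel from two ingredients: a direct sum decomposition of the plus/minus local conditions at $p$, together with Corollary~\ref{cor.surj}.

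The central step is to establish the local identity
\[
\widehat{E}(K_{n,p}) \otimes \Q_p/\Z_p \;=\; \widehat{E}^+(K_{n,p}) \otimes \Q_p/\Z_p \;\oplus\; \widehat{E}^-(K_{n,p}) \otimes \Q_p/\Z_p.
\]
Write $V_n$ for the ambient group and $V_n^\pm$ for the two subgroups. By the Pontryagin-dual identifications $(V_n^\pm)^\vee \cong \omega_n^\mp \Lambda_n \cong \Lambda_n/(\omega_n^\pm)$ coming from \cite[Theorem 3.2 and Lemma 3.3]{BKO24} (already invoked in the proof of the previous lemma), together with the identification $V_n^\vee \cong \Lambda_n$ supplied by the Lubin--Tate $\mathcal{O}$-module structure of $\widehat{E}$, the decomposition reduces to the Chinese Remainder Theorem: since $\omega_n^+$ and $\omega_n^-$ consist of distinct irreducible cyclotomic factors $\Phi_{p^k}(1+T)$ (with $T$ dividing only $\omega_n^-$), they are coprime in $\Lambda$, and hence
\[
\Lambda_n \;=\; \Lambda/(\omega_n^+\omega_n^-) \;\cong\; \Lambda/(\omega_n^+) \oplus \Lambda/(\omega_n^-) \;\cong\; (V_n^+)^\vee \oplus (V_n^-)^\vee.
\]

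Given the decomposition, both sequences follow by a short diagram chase. The triviality $V_n^+ \cap V_n^- = 0$ yields $\Sel^+(E/K_n) \cap \Sel^-(E/K_n) = \Sel^0(E/K_n)$ and $\mathcal{M}^+ \cap \mathcal{M}^- = \mathcal{M}^0$, so the kernel of the middle-to-right map $(a,b) \mapsto a - b$ is the diagonal copy of the leftmost term. For surjectivity, fix $x$ in the target group ($\Sel(E/K_n)$ or $E(K_n) \otimes \Q_p/\Z_p$) and uniquely decompose $x|_p = u + v$ with $u \in V_n^+$ and $v \in V_n^-$. Taking $\varepsilon = +$ (the case $\varepsilon = -$ is symmetric), Corollary~\ref{cor.surj} lets us lift $u$ to some $a \in \mathcal{M}^+ \subseteq \Sel^+$; then $(x - a)|_p = v \in V_n^-$ forces $x - a \in \Sel^-$, and moreover $x - a \in \mathcal{M}^-$ in the second sequence (since $a \in E(K_n) \otimes \Q_p/\Z_p$, and hence $x - a$ is too). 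Thus $x = a + (x - a)$ lies in $\Sel^+ + \Sel^-$ (respectively $\mathcal{M}^+ + \mathcal{M}^-$).

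The main obstacle is the verification of the direct sum decomposition itself: one must check that the CRT decomposition of $\Lambda_n$ corresponds, under Pontryagin duality, to the decomposition induced by the concrete subgroup inclusions $V_n^\pm \hookrightarrow V_n$, rather than to some abstract isomorphism. The one-sidedness of Corollary~\ref{cor.surj}---that it concerns only the $\varepsilon$-Selmer group---is not a genuine obstruction, since once $V_n = V_n^+ \oplus V_n^-$ is established, lifting a single summand suffices.
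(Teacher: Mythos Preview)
Your proposal is correct and follows essentially the same approach as the paper: both arguments hinge on the local decomposition $\widehat{E}(K_{n,p})\otimes \Q_p/\Z_p = V_n^+ \oplus V_n^-$ together with Corollary~\ref{cor.surj} to lift the $\varepsilon$-component, after which the remainder automatically lands in $\Sel^{-\varepsilon}$ (resp.\ $\mathcal{M}^{-\varepsilon}$). The only difference is that the paper simply cites \cite[Theorem~3.2]{BKO24} for the direct sum rather than rederiving it via CRT, and phrases the surjectivity step as ``$\Im(\Sel^+ + \Sel^-) = \Im(\Sel)$, then use $\Sel^0 \subset \Sel^\pm$'' instead of your explicit element-by-element decomposition---but these are the same argument.
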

\begin{proof}
    For the first claim it suffices to show that \[\Sel^+(E/K_n)+\Sel^-(E/K_n)=\Sel(E/K_n).\] 
 As $\widehat{E}(K_{n,p})\otimes \Q_p/\Z_p=(\widehat{E}^+(K_{n,p})\otimes \Q_p/\Z_p)\oplus (E^-(K_{n,p})\otimes \Q_p/\Z_p)$ (c.f. \cite[Theorem 3.2]{BKO24}), Corollary \ref{cor.surj} implies that indeed \[\Im(\Sel^+(E/K_n)+\Sel^-(E/K_n))=\Im(\Sel(E/K_n)),\]
 where $\Im(\cdot)$ denotes the image inside $E(K_{n,p})\otimes \Q_p/\Z_p$. As $\Sel^0(E/K_n)\subset \Sel^\pm(E/K_n)$, the first claim follows.

 The second claim can be proved similarly. 
\end{proof}
As an immediate Corollary we obtain
\begin{corollary}\label{cor.exact.sha} We have a short exact sequence
    \[0\to \kappa^0(E/K_n)\to \kappa^+(E/K_n)\oplus \kappa^-(E/K_n)\to \sha(E/K_n)\to 0\]
\end{corollary}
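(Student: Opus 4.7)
The plan is to obtain the claimed short exact sequence as an immediate consequence of Proposition \ref{prop-selmer-exact} via the snake lemma. Stack the two short exact sequences from that proposition on top of each other to form the commutative diagram
\[
\begin{CD}
0 @>>> \mathcal{M}^0(E/K_n) @>>> \mathcal{M}^+(E/K_n)\oplus\mathcal{M}^-(E/K_n) @>>> E(K_n)\otimes \Q_p/\Z_p @>>> 0 \\
@. @VVV @VVV @VVV @. \\
0 @>>> \Sel^0(E/K_n) @>>> \Sel^+(E/K_n)\oplus \Sel^-(E/K_n) @>>> \Sel(E/K_n) @>>> 0,
\end{CD}
\]
in which the three vertical maps are the tautological inclusions: on the left and middle they come directly from the definition $\mathcal{M}^*(E/K_n)=\Sel^*(E/K_n)\cap(E(K_n)\otimes \Q_p/\Z_p)$, and on the right it is the Kummer embedding. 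Commutativity of the two squares is automatic because the horizontal maps in both rows are induced by exactly the same formulas inside the ambient group $H^1(K_\Sigma/K_n,E[p^\infty])$, namely the diagonal $x\mapsto(x,x)$ on the left and the difference $(x,y)\mapsto x-y$ on the right.

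Since each vertical arrow is injective, the snake lemma reduces the six-term exact sequence to a three-term one of cokernels:
\[0\to \coker V_1\to \coker V_2\to \coker V_3\to 0.\]
By the definitions, $\coker V_1=\Sel^0(E/K_n)/\mathcal{M}^0(E/K_n)=\kappa^0(E/K_n)$, $\coker V_2=\kappa^+(E/K_n)\oplus\kappa^-(E/K_n)$ (cokernel commutes with direct sum), and $\coker V_3=\Sel(E/K_n)/(E(K_n)\otimes\Q_p/\Z_p)=\sha(E/K_n)$ using the standing assumption that $\sha(E/K_n)$ is finite (so that the Kummer sequence identifies this quotient with the Tate--Shafarevich group). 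This produces precisely the desired short exact sequence.

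There is essentially no obstacle here: all the nontrivial work — verifying that the $+$ and $-$ Selmer groups together span the full Selmer group and the analogous statement for the $\mathcal{M}^*$'s — has already been carried out in Proposition \ref{prop-selmer-exact}, which relied on Corollary \ref{cor.surj} and the plus/minus decomposition of $\widehat{E}(K_{n,p})\otimes \Q_p/\Z_p$ from \cite{BKO24}. The only ingredients one needs to double-check when writing this out in full are the commutativity of the two squares (trivial, by the same formula downstairs and upstairs) and the identification of the cokernels with the objects $\kappa^0$, $\kappa^\pm$, and $\sha$, both of which are immediate from the definitions.
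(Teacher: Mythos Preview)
Your proof is correct and follows exactly the same approach as the paper: both stack the two short exact sequences of Proposition~\ref{prop-selmer-exact} into a commutative diagram with injective vertical maps and apply the snake lemma to identify the cokernels with $\kappa^0$, $\kappa^+\oplus\kappa^-$, and $\sha$. The only remark is that the identification of $\Sel(E/K_n)/(E(K_n)\otimes\Q_p/\Z_p)$ with (the $p$-primary part of) $\sha(E/K_n)$ does not actually require the finiteness hypothesis you invoke.
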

\begin{proof}
    Consider the following commutative diagram:
    \[\begin{tikzcd}[font=\small, column sep=1em, row sep=1em]
        0\arrow[r]&\mathcal{M}^0(E/K_n)\arrow[r]\arrow[d]&\mathcal{M}^+(E/K_n)\oplus \mathcal{M}^-(E/K_n)\arrow[r]\arrow[d]& E(K_n)\otimes \Q_p/\Z_p\arrow[r]\arrow[d]&0\\
        0\arrow[r]&\Sel^0(E/K_n)\arrow[r]&\Sel^+(E/K_n)\oplus \Sel^{-}(E/K_n)\arrow[r]&\Sel(E/K_n)\arrow[r]&0
    \end{tikzcd}\]
    The vertical maps are all injective. The claim now follows from the snake lemma.
\end{proof}
\section{Plus/Minus Tate-Shafarevich groups}
In view of Corollary  \ref{cor.exact.sha} it suffices to find assimptotic formulas for $\kappa^+(E/K_n)$ for $*\in \{\pm,0\}$. In teh present section we will concentrate on the comparison of $\kappa^\varepsilon(E/K_n)$ and $\kappa^0(E/K_n)$. 
\begin{defn}
    Let $*\in \{0,+,-\}$ we denote by \[\alpha^*_{n,n-1}\colon \kappa^*(E/K_{n-1})\to \kappa^*(E/K_n)\] the natural map. We define
    \[\kappa^*_{n,n-1}=\frac{\kappa^*(E/K_n)}{\Im(\alpha^*_{n,n-1})}.\]
    Analogosly we define $\sha_{n,n-1}=\frac{\sha(E/K_n)}{\Im (\sha(E/K_{n-1})\to \sha(E/K_n))}$
\end{defn}

\begin{lemma}
\label{lem:Phi_n-inj}
    The natural homomorphism 
    \[\Phi_n\colon\kappa^0_{n,n-1}\to \kappa^\varepsilon_{n,n-1}\]
    is an isomorphism. 
\end{lemma}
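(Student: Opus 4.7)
The plan is to promote the claim to the stronger assertion that, for every $n$, the natural map $\iota_n\colon \kappa^0(E/K_n) \to \kappa^\varepsilon(E/K_n)$ coming from the inclusion $\Sel^0(E/K_n) \hookrightarrow \Sel^\varepsilon(E/K_n)$ (which restricts to $\mathcal{M}^0(E/K_n) \hookrightarrow \mathcal{M}^\varepsilon(E/K_n)$) is already an isomorphism. Since this identification is natural in $n$ with respect to the restriction maps defining $\alpha^*_{n,n-1}$, passing to the quotients by the images of those maps yields $\Phi_n$ on the cokernels $\kappa^*_{n,n-1}$, which is then automatically an isomorphism as well.

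To produce $\iota_n$ I would apply the snake lemma to the commutative diagram with exact rows
\[0 \to \mathcal{M}^0(E/K_n) \to \mathcal{M}^\varepsilon(E/K_n) \to \widehat{E}^\varepsilon(K_{n,p})\otimes \Q_p/\Z_p \to 0,\]
\[0 \to \Sel^0(E/K_n) \to \Sel^\varepsilon(E/K_n) \to \widehat{E}^\varepsilon(K_{n,p})\otimes \Q_p/\Z_p \to 0,\]
in which the two left-hand vertical arrows are the natural inclusions and the right-hand vertical arrow is the identity. Surjectivity of the two right-hand horizontal arrows is supplied directly by Corollary \ref{cor.surj}. For exactness in the middle, one uses that the Kummer injection $\widehat{E}(K_{n,p}) \otimes \Q_p/\Z_p \hookrightarrow H^1(K_{n,p}, E[p^\infty])$ restricts to an injection on the $\varepsilon$-part, so a class in $\Sel^\varepsilon(E/K_n)$ whose image in $\widehat{E}^\varepsilon(K_{n,p}) \otimes \Q_p/\Z_p$ vanishes actually has trivial total local component at $p$, and hence lies in $\Sel^0(E/K_n)$. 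The analogous statement for the top row follows by intersecting with $E(K_n) \otimes \Q_p/\Z_p$.

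Because the two leftmost verticals are injective and the rightmost is the identity, the snake sequence collapses to a single isomorphism between $\coker(\mathcal{M}^0 \hookrightarrow \Sel^0) = \kappa^0(E/K_n)$ and $\coker(\mathcal{M}^\varepsilon \hookrightarrow \Sel^\varepsilon) = \kappa^\varepsilon(E/K_n)$, which is precisely $\iota_n$. Naturality of the construction in $n$ then induces $\Phi_n$ on the quotients $\kappa^*_{n,n-1}$, and $\Phi_n$ inherits the isomorphism property.

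The only point that requires a little care is confirming that Corollary \ref{cor.surj} is applicable uniformly in $n$, including those layers with $(-1)^n = -\varepsilon$. In that range, however, $\omega_n^\varepsilon = \omega_{n-1}^\varepsilon$, so $\widehat{E}^\varepsilon(K_{n,p}) \otimes \Q_p/\Z_p$ coincides with its counterpart at level $n-1$ and the required surjectivity is inherited from the lower layer. Beyond this bookkeeping the snake lemma does all the real work, and the statement reduces to putting the right diagram on paper.
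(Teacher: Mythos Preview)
Your argument is correct, and it proceeds along a genuinely different route from the paper. The paper works directly at the level of the quotients $\kappa^*_{n,n-1}$: it shows $\coker(\Phi_n)=0$ by identifying it with $\Sel^\varepsilon(E/K_n)/(\Sel^\varepsilon(E/K_{n-1})+\mathcal{M}^\varepsilon(E/K_n)+\Sel^0(E/K_n))$ and invoking Corollary~\ref{cor.surj}, and then handles injectivity by explicitly computing the subgroup $(\mathcal{M}^\varepsilon(E/K_n)+\Sel^\varepsilon(E/K_{n-1}))\cap \Sel^0(E/K_n)$ via a chain of inclusions that ultimately places it inside $\Sel^0(E/K_{n-1})+\mathcal{M}^0(E/K_n)$. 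Your approach instead proves the stronger statement $\kappa^0(E/K_n)\cong \kappa^\varepsilon(E/K_n)$ at each fixed level, by recognizing that $\Sel^0$ (resp.\ $\mathcal{M}^0$) is exactly the kernel of the local map $\Sel^\varepsilon\to \widehat{E}^\varepsilon(K_{n,p})\otimes\Q_p/\Z_p$ (resp.\ $\mathcal{M}^\varepsilon\to \widehat{E}^\varepsilon(K_{n,p})\otimes\Q_p/\Z_p$), so that the snake lemma with the identity on the right column gives the isomorphism immediately. This is cleaner, avoids the somewhat delicate intersection computations, and has the bonus that Corollary~\ref{cor:kappa-inj} then becomes a tautological consequence of Lemma~\ref{lem:kappa-epsilon} rather than requiring the two-level argument via $\Phi_n$. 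The paper's hands-on computations, on the other hand, make the interaction between the various subgroups of $\Sel(E/K_n)$ more explicit, which is in the spirit of the surrounding arguments (e.g.\ Lemma~\ref{the-right-sign} and Proposition~\ref{shainj}). Your worry about the parity of $n$ in applying Corollary~\ref{cor.surj} is unnecessary: that corollary is stated and proved for all $n$.
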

\begin{proof}
    By definition,
    \[\coker(\Phi_n)\cong \frac{\Sel^\varepsilon(E/K_n)}{\Sel^\varepsilon(E/K_{n-1})+\mathcal{M}^\varepsilon(E/K_n)+\Sel^0(E/K_n)}=0\]
    by Corollary \ref{cor.surj}. It remains to show that $\Phi_n$ is injective. 

    The kernel of $\Phi_n$ is given by the image of $(\mathcal{M}^\varepsilon(E/K_n)+\Sel^\varepsilon(E/K_{n-1}))\cap \Sel^0(E/K_n)$ in $\kappa^0_{n,n-1}$. Note that 
    \begin{align*}
        &(\mathcal{M}^\varepsilon(E/K_n)+\Sel^\varepsilon(E/K_{n-1}))\cap \Sel^0(E/K_n)\\&\subset (\mathcal{M}^\varepsilon(E/K_{n-1})+\mathcal{M}^0(E/K_n)+\Sel^\varepsilon(E/K_{n-1}))\cap \Sel^0(E/K_n)\\
        &=(\mathcal{M}^0(E/K_n)+\Sel^\varepsilon(E/K_{n-1}))\cap \Sel^0(E/K_n)\\
        &=(\Sel^\varepsilon(E/K_{n-1})\cap \Sel^0(E/K_n))+\mathcal{M}^0(E/K_n)=\Sel^0(E/K_{n-1})+\mathcal{M}^0(E/K_n),
    \end{align*} where the first inclusion follows from the following fact. Let $a\in \mathcal{M}^\varepsilon(E/K_n)$, $b\in \Sel^\varepsilon(E/K_{n-1})$ and $a+b\in \Sel^0(E/K_n)$. Then $\Im (a)\in \widehat{E}^\varepsilon(K_{n-1,p})\otimes \Q_p/\Z_p$, which implies by Corollary \ref{cor.surj} that $a\in \mathcal{M}^0(E/K_n)+\mathcal{M}^\varepsilon(E/K_{n-1})$.

    By definition, the last term in the above equation has trivial image in $\kappa^0_{n,n-1}$. Thus $\Phi_n$ is indeed injective. 
\end{proof}
The next lemma is an preparation to prove the following exact sequence
\[0\to \kappa^0_{n,n-1}\to \kappa^+_{n,n-1}\oplus \kappa^-_{n,n-1}\to \sha_{n,n-1}\to 0.\]
\begin{lemma}
    \label{the-right-sign} Assume that $(-1)^n=\varepsilon$.
    The natural homomorphism
    \[\kappa^\varepsilon_{n,n-1}\to \sha_{n,n-1}\]
    is injective. 
\end{lemma}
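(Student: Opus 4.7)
The plan is to apply the snake lemma to the commutative diagram comparing the short exact sequence of Corollary \ref{cor.exact.sha} at levels $n-1$ and $n$. This yields the exact sequence
\[\kappa^0_{n,n-1}\xrightarrow{z\mapsto (\Phi_n^+(z),\,-\Phi_n^-(z))}\kappa^+_{n,n-1}\oplus \kappa^-_{n,n-1}\to \sha_{n,n-1}\to 0,\]
where $\Phi_n^\pm\colon \kappa^0_{n,n-1}\to \kappa^\pm_{n,n-1}$ denote the natural maps. An element $\alpha\in \kappa^\varepsilon_{n,n-1}$, embedded in the middle term as $(\alpha,0)$, maps to zero in $\sha_{n,n-1}$ if and only if $(\alpha,0)$ lies in the image of $\kappa^0_{n,n-1}$, which forces $\alpha=\Phi_n^\varepsilon(z)$ and $\Phi_n^{-\varepsilon}(z)=0$ for some $z\in \kappa^0_{n,n-1}$. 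Since Lemma \ref{lem:Phi_n-inj} identifies $\Phi_n^\varepsilon$ with an isomorphism, $z$ is uniquely determined by $\alpha$, and injectivity of $\kappa^\varepsilon_{n,n-1}\to \sha_{n,n-1}$ reduces to injectivity of $\Phi_n^{-\varepsilon}$.

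To establish the latter, I would take a representative $z\in \Sel^0(E/K_n)$ of a class killed by $\Phi_n^{-\varepsilon}$, so that $z=m+s$ with $m\in \mathcal{M}^{-\varepsilon}(E/K_n)$ and $s\in \Sel^{-\varepsilon}(E/K_{n-1})$. Under $(-1)^n=\varepsilon$ one has $\omega_n^{-\varepsilon}=\omega_{n-1}^{-\varepsilon}$, so the inclusion $\widehat{E}^{-\varepsilon}(K_{n-1,p})\otimes \Q_p/\Z_p\hookrightarrow \widehat{E}^{-\varepsilon}(K_{n,p})\otimes \Q_p/\Z_p$ has finite cokernel, and the condition $z\in \Sel^0(E/K_n)$ ensures that $\mathrm{loc}_p(m)=-\mathrm{loc}_p(s)$ already lies in the image from level $n-1$. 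The aim is then to adjust the decomposition of $z$ so that $m\in \mathcal{M}^0(E/K_n)$ and $s\in \Sel^0(E/K_{n-1})$; the second condition will follow from the injectivity of the local restriction $H^1(K_{n-1,p},E[p^\infty])\to H^1(K_{n,p},E[p^\infty])$, which holds because $E[p^\infty](K_{n,p})=0$: the formal group $\widehat{E}$ is a height-one Lubin-Tate group for $\mathcal{O}_{K_p}$, whose $p$-torsion field has ramification degree $p^2-1$ over $K_p$, coprime to $[K_{n,p}:K_p]=p^n$.

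The principal obstacle, in contrast to the proof of Lemma \ref{lem:Phi_n-inj}, is that Corollary \ref{cor.surj} has no direct analogue with sign $-\varepsilon$: the cotorsion nature of $\Sel^{-\varepsilon}(E/K_\infty)$ prevents $\mathcal{M}^{-\varepsilon}(E/K_{n-1})$ from surjecting onto $\widehat{E}^{-\varepsilon}(K_{n-1,p})\otimes \Q_p/\Z_p$, so one cannot simply kill $\mathrm{loc}_p(m)$ by a single element of $\mathcal{M}^{-\varepsilon}(E/K_{n-1})$. I expect the fix to come from exploiting the equality $\omega_n^{-\varepsilon}=\omega_{n-1}^{-\varepsilon}$ together with the finiteness hypothesis on $\Sel^0(E/K_n)[\omega_n^{-\varepsilon}]$ built into Theorem \ref{Thm:a}, which together should be strong enough to effect the adjustment. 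Carrying out this comparison precisely is the main technical step of the argument.
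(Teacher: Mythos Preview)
Your reduction via the snake lemma is correct: the kernel of $\kappa^\varepsilon_{n,n-1}\to\sha_{n,n-1}$ is precisely $\Phi_n^\varepsilon(\ker\Phi_n^{-\varepsilon})$, so by Lemma~\ref{lem:Phi_n-inj} the statement is equivalent to the injectivity of $\Phi_n^{-\varepsilon}$. But from that point on your proposal is not a proof; it is a sketch that ends by acknowledging that ``carrying out this comparison precisely is the main technical step''. Worse, you anticipate needing the finiteness of $\Sel^0(E/K_n)[\omega_n^{-\varepsilon}]$, a hypothesis the lemma does \emph{not} carry. The paper proves Lemma~\ref{the-right-sign} under the sole assumption $(-1)^n=\varepsilon$, and this unconditional form is essential later: in Theorem~\ref{main-thm} the lemma supplies the inequality $\vert\kappa^0_{n,n-1}\vert\le\vert\sha_{n,n-1}\vert$ independently of the finiteness hypothesis, which only enters through Corollary~\ref{cor.other-sign} on the other side.

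The paper avoids the $-\varepsilon$ sign altogether. Instead of passing through $\kappa^{-\varepsilon}_{n,n-1}$, it computes directly the kernel of
\[
\Psi_n\colon \Sel^\varepsilon(E/K_n)\longrightarrow \sha_{n,n-1}
\]
and shows that $\ker\Psi_n=\Sel^\varepsilon(E/K_n)\cap\bigl(E(K_n)\otimes\Q_p/\Z_p+\Sel(E/K_{n-1})\bigr)$ collapses to $\Sel^\varepsilon(E/K_{n-1})+\mathcal{M}^\varepsilon(E/K_n)$, which dies in $\kappa^\varepsilon_{n,n-1}$. The manipulations use only the $\varepsilon$-surjectivity of Corollary~\ref{cor.surj} and the equality $\Sel^\varepsilon(E/K_n)\cap\Sel(E/K_{n-1})=\Sel^\varepsilon(E/K_{n-1})$ (coming from $\widehat{E}^\varepsilon(K_{n,p})\cap\widehat{E}(K_{n-1,p})=\widehat{E}^\varepsilon(K_{n-1,p})$), together with the parity assumption. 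In effect, the paper works on the ``large'' side (full $\Sel$ and $E(K_n)\otimes\Q_p/\Z_p$) where the $\varepsilon$-part is well controlled, rather than on the ``small'' side $\Sel^{-\varepsilon}$ where, as you correctly note, there is no analogue of Corollary~\ref{cor.surj}. This is the missing idea in your proposal.
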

\begin{proof}
    Consider the natural homomorphism 
    \[\Psi_n\colon \Sel^{\varepsilon}(E/K_n)\to \sha_{n,n-1}.\]
    The kernel is given by
    \begin{align*}
        &\Sel^{\varepsilon}(E/K_n)\cap (E(K_n)\otimes \Q_p/\Z_p+\Sel(E/K_{n-1}))\\&=(E(K_{n-1})\otimes\Q_p/\Z_p+\mathcal{M}^\varepsilon(E/K_n)+\Sel(E/K_{n-1}))\cap \Sel^\varepsilon(E/K_n)\\&=(\Sel(E/K_{n-1})+\mathcal{M}^\varepsilon(E/K_n))\cap\Sel^\varepsilon(E/K_n)\\&=\Sel^\varepsilon(E/K_n)\cap \Sel(E/K_{n-1})+\mathcal{M}^\varepsilon(E/K_n)\\&=\Sel^\varepsilon(E/K_{n-1})+\mathcal{M}^\varepsilon(E/K_n),
    \end{align*}
    which has a trivial image in $\kappa^\varepsilon_{n,n-1}$.
\end{proof}
\begin{proposition}
\label{prop.exact.sha}
    There is an exact sequence
    \[0\to \kappa^0_{n,n-1}\to \kappa^+_{n,n-1}\oplus \kappa^-_{n,n-1}\to \sha_{n,n-1}\to 0\]
\end{proposition}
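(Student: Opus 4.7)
The plan is to derive the sequence by applying the snake lemma to two copies of the short exact sequence from Corollary \ref{cor.exact.sha}, and then to invoke Lemma \ref{lem:Phi_n-inj} to establish injectivity of the leftmost map. Concretely, I would assemble the commutative diagram
\[
\begin{tikzcd}[column sep=small, row sep=small]
0 \arrow[r] & \kappa^0(E/K_{n-1}) \arrow[r] \arrow[d] & \kappa^+(E/K_{n-1}) \oplus \kappa^-(E/K_{n-1}) \arrow[r] \arrow[d] & \sha(E/K_{n-1}) \arrow[r] \arrow[d] & 0 \\
0 \arrow[r] & \kappa^0(E/K_n) \arrow[r] & \kappa^+(E/K_n) \oplus \kappa^-(E/K_n) \arrow[r] & \sha(E/K_n) \arrow[r] & 0
\end{tikzcd}
\]
whose rows are the short exact sequences of Corollary \ref{cor.exact.sha} and whose vertical maps are the natural ones induced by $K_{n-1}\subseteq K_n$.

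Applying the snake lemma to this diagram immediately yields the right-exact cokernel sequence
\[
\kappa^0_{n,n-1} \xrightarrow{\iota} \kappa^+_{n,n-1} \oplus \kappa^-_{n,n-1} \to \sha_{n,n-1} \to 0,
\]
which already provides exactness at the middle and rightmost terms. To upgrade this to the asserted short exact sequence it remains to verify that $\iota$ is injective. By construction, $\iota$ is induced by the diagonal inclusion $\kappa^0(E/K_m)\hookrightarrow \kappa^+(E/K_m)\oplus\kappa^-(E/K_m)$ appearing in Corollary \ref{cor.exact.sha}; post-composing with the projection onto the $\varepsilon$-summand therefore recovers precisely the map $\Phi_n$ studied in Lemma \ref{lem:Phi_n-inj}, which is an isomorphism. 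Since a map whose composition with a projection is injective is itself injective, $\iota$ must be injective, and the asserted short exact sequence follows.

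The main, and essentially only, point of care is confirming that the induced map on cokernels coincides with $\Phi_n$ after projection onto the $\varepsilon$-component. This identification is purely formal and diagram-theoretic, relying on the fact that both maps arise from the natural inclusions $\Sel^0\hookrightarrow\Sel^\varepsilon$ and $\mathcal{M}^0\hookrightarrow\mathcal{M}^\varepsilon$ that define the maps in Corollary \ref{cor.exact.sha} and Lemma \ref{lem:Phi_n-inj}. The substantial work, namely establishing that $\Phi_n$ is an isomorphism via Corollary \ref{cor.surj}, has already been carried out and need not be redone.
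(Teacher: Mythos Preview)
Your argument is correct and matches the paper's proof essentially verbatim: the paper sets up the same two-row diagram from Corollary~\ref{cor.exact.sha}, applies the snake lemma to obtain the right-exact cokernel sequence, and then cites Lemma~\ref{lem:Phi_n-inj} for injectivity on the left. Your additional remark that projecting $\iota$ onto the $\varepsilon$-summand recovers $\Phi_n$ is precisely the detail the paper leaves implicit.
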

\begin{proof}
    Consider the following commutative diagram
    \[\begin{tikzcd}[font=\small, column sep=1em, row sep=1em]
        0\arrow[r]&\kappa^0(E/K_{n-1})\arrow[r]\arrow[d,"a"]&\kappa^+(E/K_{n-1})\oplus \kappa^-(E/K_{n-1})\arrow[r]\arrow[d,"b"]&\sha(E/K_{n-1})\arrow[r]\arrow[d,"c"]&0\\
        0\arrow[r]&\kappa^0(E/K_{n})\arrow[r]&\kappa^+(E/K_{n})\oplus \kappa^-(E/K_{n})\arrow[r]&\sha(E/K_{n})\arrow[r]&0
    \end{tikzcd},\]
    where the rows are exact by Corollary \ref{cor.exact.sha}. Applying the snake lemma we obtain
    \[\kappa^0_{n,n-1}\to \kappa^+_{n,n-1}\oplus \kappa^-_{n,n-1}\to \sha_{n,n-1}\to 0.\]
    The left most homomorphism is injective by Lemma \ref{lem:Phi_n-inj}, which implies the desired short exact sequence. 
\end{proof}
\begin{corollary}
\label{reduction-to-other-sign}
    We have
    \[\vert \sha_{n,n-1}\vert=\vert \kappa^{-\varepsilon}_{n,n-1}\vert.\]
\end{corollary}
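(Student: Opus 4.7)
The plan is to derive this identity as a short counting argument combining Proposition \ref{prop.exact.sha} with Lemma \ref{lem:Phi_n-inj}. First I would invoke the short exact sequence
\[0 \to \kappa^0_{n,n-1} \to \kappa^+_{n,n-1} \oplus \kappa^-_{n,n-1} \to \sha_{n,n-1} \to 0\]
of Proposition \ref{prop.exact.sha}. Before taking orders, it is necessary to confirm that every term is finite; I would do this by observing that the inclusion $\Sel^*(E/K_n)\hookrightarrow\Sel(E/K_n)$ descends to a well-defined map $\kappa^*(E/K_n)\to\sha(E/K_n)$ whose kernel is $(\Sel^*(E/K_n)\cap(E(K_n)\otimes\Q_p/\Z_p))/\mathcal{M}^*(E/K_n)$, which vanishes by the very definition of $\mathcal{M}^*$. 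Hence each $\kappa^*(E/K_n)$ embeds into $\sha(E/K_n)$ and is finite under the standing hypothesis, and the same therefore holds for the quotients $\kappa^*_{n,n-1}$ and $\sha_{n,n-1}$.

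Once finiteness is granted, the exact sequence gives the multiplicativity
\[|\sha_{n,n-1}| \;=\; \frac{|\kappa^+_{n,n-1}|\cdot|\kappa^-_{n,n-1}|}{|\kappa^0_{n,n-1}|}.\]
Now I would apply Lemma \ref{lem:Phi_n-inj}, which supplies an isomorphism $\kappa^0_{n,n-1}\cong\kappa^\varepsilon_{n,n-1}$, so $|\kappa^0_{n,n-1}|=|\kappa^\varepsilon_{n,n-1}|$. Writing the product in the numerator as $|\kappa^\varepsilon_{n,n-1}|\cdot|\kappa^{-\varepsilon}_{n,n-1}|$ and cancelling against the denominator yields exactly $|\kappa^{-\varepsilon}_{n,n-1}|$, proving the corollary. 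There is no genuine obstacle at this step: the structural content has already been placed in Proposition \ref{prop.exact.sha} (the decomposition into plus/minus parts glued along the fine Selmer group) and in Lemma \ref{lem:Phi_n-inj} (the matching between the $\varepsilon$-side and the fine side over one layer), so what remains is purely a cardinality bookkeeping. The only thing to keep in mind is that Lemma \ref{lem:Phi_n-inj} rests on the surjectivity in Corollary \ref{cor.surj}, which is available precisely when $(-1)^n=\varepsilon$; this is the implicit regime in which the corollary is meaningful.
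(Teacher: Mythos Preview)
Your proof is correct and matches the paper's: the author simply records the corollary as an immediate consequence of Lemma \ref{lem:Phi_n-inj} and Proposition \ref{prop.exact.sha}, and you have spelled out exactly that cardinality bookkeeping. One minor correction to your closing caveat: Corollary \ref{cor.surj} and Lemma \ref{lem:Phi_n-inj} are stated and proved without any parity hypothesis on $n$ (the proof of Corollary \ref{cor.surj} invokes $e_m\ge 1$ for all $m\le n$ with $(-1)^m=\varepsilon$, which yields the required surjectivity irrespective of the parity of $n$ itself), so the present corollary likewise holds for every $n$.
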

\begin{proof}
    This is an immediate consequence of Lemma \ref{lem:Phi_n-inj} and Proposition \ref{prop.exact.sha}.
\end{proof}

\subsection{Estimating $\kappa^0_{n,n-1}$}
Before we continue to estin´mate $\sha_{n,n-1}$ and $\kappa^{-\varepsilon}_{n,n-1}$ we first determine $\kappa^0_{n,n-1}$.
\begin{lemma}
\label{lem:kappa-epsilon}
    The natural homomorphism
    \[\kappa^\varepsilon(E/K_{n-1})\to \kappa^\varepsilon(E/K_n)\]
    is injective.
\end{lemma}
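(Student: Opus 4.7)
The plan is a direct element chase in the defining short exact sequence $0\to \mathcal{M}^\varepsilon(E/K_n)\to \Sel^\varepsilon(E/K_n)\to \kappa^\varepsilon(E/K_n)\to 0$. Suppose $[x]\in\kappa^\varepsilon(E/K_{n-1})$ maps to zero in $\kappa^\varepsilon(E/K_n)$, represented by some $x\in \Sel^\varepsilon(E/K_{n-1})$. The hypothesis says that the restriction of $x$ to $K_n$ lies in $\mathcal{M}^\varepsilon(E/K_n)\subseteq E(K_n)\otimes\Q_p/\Z_p$, so by the injective Kummer map $\delta_n$ there is a unique $P_n\in E(K_n)\otimes\Q_p/\Z_p$ with $\mathrm{res}(x)=\delta_n(P_n)$. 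The aim is to conclude that $x\in \mathcal{M}^\varepsilon(E/K_{n-1})$.

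Set $G=\mathrm{Gal}(K_n/K_{n-1})$. Since $x$ lives over $K_{n-1}$, the restriction $\mathrm{res}(x)$ is $G$-invariant, and Kummer injectivity forces $P_n\in (E(K_n)\otimes\Q_p/\Z_p)^G$. The first key input I need is that this $G$-fixed subgroup coincides with the image of $E(K_{n-1})\otimes\Q_p/\Z_p$. Rationally this is immediate from semisimplicity of $\Q_p[G]$ (since $|G|=p$ is invertible in $\Q_p$) applied to the $G$-representation $E(K_n)\otimes\Q_p$; the refinement to $\Q_p/\Z_p$-coefficients is controlled by the Tate cohomology of $G$ on the finitely generated $\Z_p$-module $E(K_n)\otimes\Z_p$, which vanishes in our setting because $E[p^\infty](K_\infty)=0$. (This last fact is standard for CM elliptic curves at inert supersingular primes $p\geq 5$: $\mathrm{Gal}(K_\infty/K)\cong\Z_p$ acts on $E[p^\infty]$ through a quotient of $(\mathcal{O}_K\otimes\Z_p)^\times$, whose prosolvable $p$-part is trivial in the anticyclotomic direction.) Hence $P_n$ descends to some $P_{n-1}\in E(K_{n-1})\otimes\Q_p/\Z_p$.

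The second step promotes this descent to $x$ itself. The class $\delta_{n-1}(P_{n-1})\in\Sel(E/K_{n-1})$ has the same restriction to $K_n$ as $x$, so the difference $x-\delta_{n-1}(P_{n-1})$ lies in the kernel of $H^1(K_\Sigma/K_{n-1},E[p^\infty])\to H^1(K_\Sigma/K_n,E[p^\infty])$, which by inflation--restriction equals $H^1(G,E[p^\infty](K_n))$, and this vanishes by the same triviality of $E[p^\infty](K_\infty)$. Therefore $x=\delta_{n-1}(P_{n-1})\in E(K_{n-1})\otimes\Q_p/\Z_p$, and combined with $x\in\Sel^\varepsilon(E/K_{n-1})$ this places $x\in \mathcal{M}^\varepsilon(E/K_{n-1})$, so $[x]=0$ as required.

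The main obstacle is step one: the natural map $E(K_{n-1})\otimes\Q_p/\Z_p\to (E(K_n)\otimes\Q_p/\Z_p)^G$ is always an isomorphism up to finite kernel and cokernel bounded by the Tate cohomology $\widehat{H}^i(G,E(K_n)\otimes\Z_p)$, but pinning down exact equality (as opposed to pseudo-isomorphism) requires invoking the triviality of $E[p^\infty](K_\infty)$ explicitly. Once that is in hand, the rest of the argument is a clean inflation--restriction chase.
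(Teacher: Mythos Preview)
Your argument has a genuine gap at the key step: the claim that the Tate cohomology $\widehat{H}^i(G,E(K_n)\otimes\Z_p)$ vanishes because $E[p^\infty](K_\infty)=0$. The vanishing of $p$-power torsion points only tells you that $E(K_n)\otimes\Z_p$ is $\Z_p$-free; it says nothing about cohomological triviality as a $G=\Gal(K_n/K_{n-1})$-module. In fact the obstruction is typically \emph{nonzero} here. The new part of $E(K_n)\otimes\Q_p$ (which exists precisely because $e_n\ge 1$ when $(-1)^n=\varepsilon$) looks rationally like copies of $\Q_p[\zeta_p]$, and already for the integral model $M=\Z_p[\zeta_p]=\Z_p[G]/(N_G)$ one computes $H^1(G,M)\cong\widehat{H}^0(G,\Z_p)=\Z/p\neq 0$. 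So the cokernel of $E(K_{n-1})\otimes\Q_p/\Z_p\to (E(K_n)\otimes\Q_p/\Z_p)^G$, which is exactly $H^1(G,E(K_n)\otimes\Z_p)$, has no reason to vanish, and your descent of $P_n$ to $P_{n-1}$ is unjustified. (The paper does eventually prove $\Sel(E/K_{n-1})\cap(E(K_n)\otimes\Q_p/\Z_p)=E(K_{n-1})\otimes\Q_p/\Z_p$ in Corollary~\ref{cor:sel}, but only as a \emph{consequence} of the present lemma together with Proposition~\ref{shainj}, so invoking it would be circular.)

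The paper's proof avoids this descent entirely by exploiting the local structure at $p$. Corollary~\ref{cor.surj} gives the surjection $\mathcal{M}^\varepsilon(E/K_{n-1})\twoheadrightarrow\widehat{E}^\varepsilon(K_{n-1,p})\otimes\Q_p/\Z_p$, which yields the decomposition $\Sel^\varepsilon(E/K_{n-1})=\mathcal{M}^\varepsilon(E/K_{n-1})+\Sel^0(E/K_{n-1})$. Intersecting with $\mathcal{M}^\varepsilon(E/K_n)$ then reduces immediately to $\mathcal{M}^\varepsilon(E/K_{n-1})+\mathcal{M}^0(E/K_{n-1})=\mathcal{M}^\varepsilon(E/K_{n-1})$. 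In other words, rather than trying to descend an arbitrary $G$-invariant global point, the paper first strips off the local $\varepsilon$-contribution using the rank-growth input $e_n\ge 1$, leaving only a fine-Selmer piece for which the descent is trivial.
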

\begin{proof}
    Consider the natural map
    \[\Sel^\varepsilon(K_{n-1})\to \kappa^\varepsilon(K_n).\]
    Its kernel is given by
    \begin{align*}
        &\mathcal{M}^\varepsilon(K_n)\cap \Sel^\varepsilon(K_{n-1})=(\mathcal{M}^\varepsilon(K_{n-1})+\Sel^0(K_{n-1}))\cap \mathcal{M}^\varepsilon(K_n)\\&=\mathcal{M}^\varepsilon(K_{n-1})+\mathcal{M}^0(K_{n-1})=\mathcal{M}^\varepsilon(K_{n-1}),
    \end{align*} where the first equality follows from Corollary \ref{cor.surj}. As the image of $\mathcal{M}^\varepsilon(K_{n-1})$ in $\kappa^\varepsilon(K_{n-1})$ is trivial, the claim follows.
\end{proof}
As an immediate consequence of the above Lemma and Lemma \ref{lem:Phi_n-inj} we obtain
\begin{corollary}
\label{cor:kappa-inj}
    The homomorphism
    \[\kappa^0(K_{n-1})\to \kappa^0(K_n)\]
    is injective.
\end{corollary}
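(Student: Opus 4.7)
The plan is to relate the fine map $\alpha^0_{n,n-1}$ to its $\varepsilon$-variant through a commutative square and invoke Lemma \ref{lem:kappa-epsilon}. First, I would record the natural comparison map $j_m\colon \kappa^0(E/K_m)\to \kappa^\varepsilon(E/K_m)$ induced by the inclusions $\Sel^0(E/K_m)\subset \Sel^\varepsilon(E/K_m)$ and $\mathcal{M}^0(E/K_m)\subset \mathcal{M}^\varepsilon(E/K_m)$, and check that $j_m$ is injective for every $m$. This is a one-line definitional verification: a representative $\tilde{x}\in \Sel^0(E/K_m)$ of a class in $\ker(j_m)$ automatically lies in $\mathcal{M}^\varepsilon(E/K_m)=\Sel^\varepsilon(E/K_m)\cap (E(K_m)\otimes \Q_p/\Z_p)$, but intersecting with $\Sel^0(E/K_m)$ forces $\tilde{x}\in \mathcal{M}^0(E/K_m)$, so the class is already trivial in $\kappa^0(E/K_m)$.

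Next, I would assemble the natural commutative square
\[\begin{tikzcd}[font=\small, column sep=2.5em]
\kappa^0(E/K_{n-1})\arrow[r, "\alpha^0_{n,n-1}"]\arrow[d, "j_{n-1}"', hook] & \kappa^0(E/K_n)\arrow[d, "j_n", hook]\\
\kappa^\varepsilon(E/K_{n-1})\arrow[r, "\alpha^\varepsilon_{n,n-1}"', hook] & \kappa^\varepsilon(E/K_n)
\end{tikzcd}\]
in which the vertical arrows are injective by the previous step and the bottom horizontal arrow is injective by Lemma \ref{lem:kappa-epsilon}. A one-line diagram chase then yields the claim: for $x\in \ker\alpha^0_{n,n-1}$ one has $\alpha^\varepsilon_{n,n-1}(j_{n-1}(x))=j_n(\alpha^0_{n,n-1}(x))=0$, so $j_{n-1}(x)=0$, and injectivity of $j_{n-1}$ forces $x=0$.

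There is no serious obstacle here; the only care required is in verifying well-definedness of $j_m$, which amounts to the evident containment $\mathcal{M}^0(E/K_m)\subset \mathcal{M}^\varepsilon(E/K_m)$ together with the fact that Kummer images respect the tower. I expect Lemma \ref{lem:Phi_n-inj} to enter the broader picture only as a consistency check on the cokernels: the isomorphism $\Phi_n$ packages the injectivity phenomenon at the level of the quotients $\kappa^*_{n,n-1}$, but for the kernel of $\alpha^0_{n,n-1}$ the direct chase above is enough.
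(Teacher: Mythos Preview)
Your argument is correct and is essentially the paper's: factor through $\kappa^\varepsilon$ via the natural commutative square and invoke Lemma~\ref{lem:kappa-epsilon}. Your direct check that $j_m$ is injective (from $\Sel^0(E/K_m)\cap\mathcal{M}^\varepsilon(E/K_m)=\mathcal{M}^0(E/K_m)$) makes explicit what the paper leaves implicit, and you are right that Lemma~\ref{lem:Phi_n-inj}, which the paper also cites, concerns the cokernels $\kappa^*_{n,n-1}$ and is not actually needed for this kernel statement.
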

To prove an asymptotic formula for $\kappa^0(E/K_n)$, we need a control theorem for $\Sel^0(E/K_n)$ and $\mathcal{M}(E/K_n)$. 
\begin{theorem}
\label{control-theorem}
    The natural homomorphisms 
    \[\Sel^0(E/K_n)\to \Sel^0(E/K_\infty)^{\Gamma_n}\]
    and
    \[\mathcal{M}^0(E/K_n)\to \mathcal{M}^0(E/K_\infty)^{\Gamma_n}\]
    are injective with uniformly bounded cokernels. 
\end{theorem}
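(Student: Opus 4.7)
The plan is to prove both control theorems by the snake lemma applied to the defining diagrams of $\Sel^0$ and $\mathcal{M}^0$, combined with inflation--restriction at the global and local levels. For the first map I would consider
\[\begin{tikzcd}[font=\small, column sep=1em, row sep=1em]
0\arrow[r]& \Sel^0(E/K_n)\arrow[r]\arrow[d,"f_n"]& H^1(K_\Sigma/K_n, E[p^\infty])\arrow[r]\arrow[d,"g_n"]& \prod_{v\in\Sigma} H^1(K_{n,v}, E[p^\infty])\arrow[d,"h_n"]\\
0\arrow[r]& \Sel^0(E/K_\infty)^{\Gamma_n}\arrow[r]& H^1(K_\Sigma/K_\infty, E[p^\infty])^{\Gamma_n}\arrow[r]& \prod_{v\in\Sigma}\prod_{w\mid v} H^1(K_{\infty,w}, E[p^\infty])^{\Gamma_n}
\end{tikzcd}\]
and apply the snake lemma. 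The five-term exact sequence identifies $\ker g_n$ with $H^1(\Gamma_n, E(K_\infty)[p^\infty])$ and places $\coker g_n$ inside $H^2(\Gamma_n, E(K_\infty)[p^\infty])$, which vanishes because $\Gamma_n \cong \Z_p$ has $p$-cohomological dimension $1$. Since $E(K_\infty)[p^\infty]$ is finite in the supersingular anticyclotomic CM setting, $\ker g_n$ is uniformly bounded.

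Next I would dispose of the local terms. The Remark after the Selmer definition kills $H^1(K_{n,v}, E[p^\infty])$ for all $v \nmid p$, leaving only the unique prime above $p$, which is inert in $K/\Q$ and totally ramified in $K_\infty/K$. A second inflation--restriction sequence controls $h_n$ there by $H^i(\Gamma_{n,p}, E(K_{\infty,p})[p^\infty])$, and since $\Gamma_{n,p} = \Gamma_n$ by total ramification and $E(K_{\infty,p})[p^\infty]$ is finite, these groups are uniformly bounded. Injectivity of $f_n$ then drops out: an element of $\ker f_n \subset \ker g_n$ satisfies the fine Selmer condition at $p$, so its image in $H^1(\Gamma_{n,p}, E(K_{\infty,p})[p^\infty])$ vanishes; the inclusion $E(K_\infty)[p^\infty] \hookrightarrow E(K_{\infty,p})[p^\infty]$ at a totally ramified prime forces the class to be trivial. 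The snake lemma then bounds $\coker f_n$ uniformly in $n$.

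For the second map I would apply an analogous argument to
\[\begin{tikzcd}[font=\small, column sep=1em, row sep=1em]
0\arrow[r]& \mathcal{M}^0(E/K_n)\arrow[r]\arrow[d]& E(K_n)\otimes \Q_p/\Z_p \arrow[r]\arrow[d]& \widehat{E}(K_{n,p})\otimes \Q_p/\Z_p \arrow[d]\\
0\arrow[r]& \mathcal{M}^0(E/K_\infty)^{\Gamma_n}\arrow[r]& (E(K_\infty)\otimes \Q_p/\Z_p)^{\Gamma_n}\arrow[r]& (\widehat{E}(K_{\infty,p})\otimes \Q_p/\Z_p)^{\Gamma_n}
\end{tikzcd}\]
whose vertical kernels and cokernels are again controlled by the finite modules $E(K_\infty)[p^\infty]$ and $E(K_{\infty,p})[p^\infty]$, using the assumed finiteness of $\sha(E/K_n)$ to realize $E(K_n)\otimes \Q_p/\Z_p$ as the maximal divisible part of $\Sel(E/K_n)$. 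Alternatively, once the control theorem for $\Sel^0$ is established, applying the snake lemma to $0 \to \mathcal{M}^0(E/K_n) \to \Sel^0(E/K_n) \to \kappa^0(E/K_n) \to 0$ (with $\kappa^0(E/K_n)$ finite because it embeds into $\sha(E/K_n)$) transfers the result directly. The hard part is the uniformity of all these bounds, which reduces to uniform finiteness of $E(K_\infty)[p^\infty]$ and $E(K_{\infty,p})[p^\infty]$ along the anticyclotomic tower, a fact that should follow from the Lubin--Tate structure of the formal group of $E$ at the supersingular prime $p$ but needs to be cited precisely.
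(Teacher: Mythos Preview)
Your argument for $\Sel^0$ is the standard one and is essentially what the paper invokes by citing Lim's control theorem; note also that in this setting $E(K_\infty)[p^\infty]=0$ outright, so the injectivity is immediate.

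The gap is in the $\mathcal{M}^0$ half. In your primary diagram the middle vertical map is
\[
E(K_n)\otimes\Q_p/\Z_p \longrightarrow \bigl(E(K_\infty)\otimes\Q_p/\Z_p\bigr)^{\Gamma_n},
\]
and its cokernel is \emph{not} governed by the finite module $E(K_\infty)[p^\infty]$. That heuristic works when $E(K_\infty)$ has bounded rank, but here $\rk E(K_n)$ is unbounded along the anticyclotomic tower, and the cokernel above is tied to $H^1\bigl(\Gamma_n, E(K_\infty)\otimes\Z_p\bigr)$, which has no reason to be uniformly bounded. So this diagram does not give a uniform bound on $\coker(a_n)$.

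Your alternative is exactly the route the paper takes, but your justification is too weak. From the snake lemma applied to $0\to\mathcal{M}^0\to\Sel^0\to\kappa^0\to 0$ one gets
\[
\ker c_n \longrightarrow \coker a_n \longrightarrow \coker b_n,
\]
so bounding $\coker b_n$ alone is not enough: you also need $\ker c_n$ uniformly bounded. Mere finiteness of $\kappa^0(E/K_n)$ (via $\kappa^0\hookrightarrow\sha$) gives finiteness of $\ker c_n$ for each $n$, not a uniform bound. The paper supplies the missing piece by proving that the transition maps $\kappa^0(E/K_{n-1})\to\kappa^0(E/K_n)$ are \emph{injective} (Corollary~\ref{cor:kappa-inj}), whence $c_n$ itself is injective. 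This injectivity is not formal: it is deduced from the inclusion $\kappa^0\hookrightarrow\kappa^\varepsilon$ together with Lemma~\ref{lem:kappa-epsilon}, whose proof in turn rests on the surjectivity of $\mathcal{M}^\varepsilon(E/K_n)\to\widehat{E}^\varepsilon(K_{n,p})\otimes\Q_p/\Z_p$ (Corollary~\ref{cor.surj}), i.e.\ on the specific structure of the $\varepsilon$-Selmer group in the corank-one anticyclotomic situation. That is the genuine input your sketch is missing.
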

\begin{proof}
    The injectivity follows from the inflation restriction exact sequence and the fact that $E(K_n)[p]=0$. That the first map has uniformly bounded cokernel follows from \cite[Theorem 1.1]{Lim-control}. To show the boundednes of the cokernels for the second homomorphism consider the following commutative diagram
    \[\begin{tikzcd}[font=\small, column sep=1em, row sep=1em]
        0\arrow[r]&\mathcal{M}^0(E/K_n)\arrow[r]\arrow[d,"a_n"]&\Sel^0(E/K_n)\arrow[r]\arrow[d,"b_n"]&\kappa^0(E/K_n)\arrow[d,"c_n"]\arrow[r]&0\\
        0\arrow[r]&\mathcal{M}^0(E/K_\infty)[\Gamma_n]\arrow[r]&\Sel^0(E/K_\infty)[\Gamma_n]\arrow[r]&\kappa^0(E/K_\infty)[\Gamma_n]
    \end{tikzcd}.\]
    By Corollary \ref{cor:kappa-inj} $c_n$ is injective. We therefore obtain an injection $\coker(a_n)\to \coker(b_n)$. As the latter group is uniformly bounded independent of $n$, the same is true for $\coker(a_n)$. 
\end{proof}
Recall that $\Sel^0(E/K_\infty)$ is $\Lambda$-cotorsion. As $\kappa^0(E/K_\infty)$ is a quotient of $\Sel^0(E/K_\infty)$, $\kappa^0(E/K_\infty)$ is $\Lambda$-cotorsion as well. In particular, its Pontryagin dual $\kappa^0(E/K_\infty)^\vee$ is a finitely torsion $\Lambda$-module. 
\begin{theorem}
\label{growth-of-kappa0}
    Let $\mu$ and $\lambda$ be the Iwasawa invariants of $\kappa(K_\infty)^\vee$. Then for all $n$ large enough we have
    \[v_p(\vert \kappa^0_{n,n-1}\vert)=\lambda +\mu \phi(p^n). \]
\end{theorem}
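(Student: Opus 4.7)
The strategy is to combine the control theorem (Theorem~\ref{control-theorem}) with the classical Iwasawa growth formula for torsion $\Lambda$-modules applied to $Y := \kappa^0(E/K_\infty)^\vee$. Since $\kappa^0(E/K_{n-1})\hookrightarrow \kappa^0(E/K_n)$ by Corollary~\ref{cor:kappa-inj}, one has $|\kappa^0_{n,n-1}| = |\kappa^0(E/K_n)|/|\kappa^0(E/K_{n-1})|$, so it suffices to pin down the growth of $|\kappa^0(E/K_n)|$.

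I would first upgrade the control theorem to $\kappa^0$ itself: applying the snake lemma to the commutative diagram in the proof of Theorem~\ref{control-theorem}, the natural map $\kappa^0(E/K_n)\to \kappa^0(E/K_\infty)^{\Gamma_n}$ is injective (as $c_n$ is injective there) with cokernel $C_n$ controlled by $\coker(b_n)$, hence uniformly bounded in $n$. Consequently
\[v_p(|\kappa^0(E/K_n)|) = v_p(|\kappa^0(E/K_\infty)^{\Gamma_n}|) - v_p(|C_n|).\]

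Next, by Pontryagin duality $|\kappa^0(E/K_\infty)^{\Gamma_n}| = |Y/\omega_n Y|$, where $\omega_n = (1+T)^{p^n}-1$. Since $Y$ is a finitely generated torsion $\Lambda$-module, the classical Iwasawa formula --- with the invariants $\mu,\lambda$ of $Y$ normalized so that the residue degree of $\mathcal{O}/\pi = \mathbb{F}_{p^2}$ is absorbed into them --- gives $v_p(|Y/\omega_n Y|) = \mu p^n + \lambda n + \nu$ for $n\gg 0$, so that the consecutive difference equals exactly $\mu\phi(p^n)+\lambda$ for $n\gg 0$.

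The main obstacle is to show that the correction term $v_p(|C_n|)-v_p(|C_{n-1}|)$ vanishes for $n\gg 0$. The uniform bound $|C_n|\le M$ on its own only restricts the correction to a finite set of values; to upgrade this to an exact equality I would compare the control diagrams at successive levels to produce compatible transition maps $C_{n-1}\to C_n$. Since $\varinjlim_n \kappa^0(E/K_n) = \kappa^0(E/K_\infty) = \varinjlim_n \kappa^0(E/K_\infty)^{\Gamma_n}$, one has $\varinjlim_n C_n = 0$, and combining this with the uniform bound --- together with the injectivity features inherited from $E(K_n)[p]=0$ and Corollary~\ref{cor:kappa-inj} --- forces $|C_n|$ to be eventually constant. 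Substituting back yields $v_p(|\kappa^0_{n,n-1}|) = \lambda + \mu\phi(p^n)$ for all $n\gg 0$, as required.
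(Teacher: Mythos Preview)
Your overall strategy---reduce to the Iwasawa growth of $Y=\kappa^0(E/K_\infty)^\vee$ via a control theorem for $\kappa^0$---is natural, but there is a genuine gap at the step where you claim that $C_n=\coker(c_n)$ is ``controlled by $\coker(b_n)$''. In the diagram from the proof of Theorem~\ref{control-theorem} the bottom row
\[
0\longrightarrow \mathcal{M}^0(E/K_\infty)^{\Gamma_n}\longrightarrow \Sel^0(E/K_\infty)^{\Gamma_n}\longrightarrow \kappa^0(E/K_\infty)^{\Gamma_n}
\]
is only \emph{left} exact, so the snake lemma gives $\coker(a_n)\hookrightarrow\coker(b_n)$ (which is all the paper uses) but says nothing about $\coker(c_n)$: the obstruction to right-exactness sits inside $\mathcal{M}^0(E/K_\infty)_{\Gamma_n}$, which need not be finite, let alone bounded, when the characteristic ideal of $\mathcal{M}^0(E/K_\infty)^\vee$ shares a factor with some $\omega_n$. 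For the same reason, your appeal to the classical formula for $|Y/\omega_nY|$ tacitly assumes that the characteristic ideal of $Y$ is coprime to every $\omega_n$, which is not automatic. Finally, even granting a uniform bound on $|C_n|$, the argument that $|C_n|$ is eventually constant from ``$\varinjlim C_n=0$ plus a uniform bound'' is invalid in general (e.g.\ $C_n$ alternating between $0$ and $\Z/p$ with zero transition maps), so the exact difference formula would not follow.

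The paper sidesteps all of this by never attempting to control $\kappa^0$ directly. It uses the already-established control for $\Sel^0$ and $\mathcal{M}^0$ to obtain growth formulas for the \emph{transition} cokernels at finite level, with invariants $(\lambda',\mu')$ and $(\lambda'',\mu'')$ coming from the parts of the characteristic ideals $F',F''$ that are prime to $\omega_n$; the snake lemma (now between two genuinely short exact sequences at levels $n-1$ and $n$) then gives $v_p(|\kappa^0_{n,n-1}|)=(\lambda'-\lambda'')+(\mu'-\mu'')\phi(p^n)$. The decisive observation---precisely the input your argument is missing---is that finiteness of $\sha(E/K_n)$ forces $\Sel^0(E/K_n)$ and $\mathcal{M}^0(E/K_n)$ to have the same $\Z_p$-corank for every $n$, so $F'$ and $F''$ have the \emph{same} cyclotomic factor $G'=G''$. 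This simultaneously shows that the characteristic ideal of $Y$ is prime to every $\omega_n$ and that $\lambda'-\lambda''=\lambda$, $\mu'-\mu''=\mu$. If you insert this corank argument into your sketch you can likely repair it, but the route through $\Sel^0$ and $\mathcal{M}^0$ is cleaner and avoids the delicate cokernel bookkeeping.
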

\begin{proof}
   Using standard arguments in Iwasawa theory Theorem \ref{control-theorem} implies that there are invariants
   $\lambda',\lambda'',\mu'$ and $\mu''$ such that 
   \[v_p(\vert\coker(\Sel^0(E/K_{n})\to \Sel^0(E/K_{n-1}))\vert)=\lambda'+\mu'\phi(p^n)\]
   and 
   \[v_p(\vert \coker(\mathcal{M}^0(E/K_{n-1})\to \mathcal{M}^0(E/K_n))\vert )=\lambda''+\mu''\phi(p^n)\]
   for $n\gg 0$.
   By Corollary \ref{cor:kappa-inj} we furthermore have an exact sequence
   \begin{align*}0\to \coker(\mathcal{M}^0(E/K_{n-1})\to \mathcal{M}^0(E/K_n))\to \\\to\coker(\Sel^0(E/K_{n-1})\to \Sel^0(E/K_{n}))\to \kappa^0_{n,n-1}\to 0,\end{align*}
   which implies 
   \[v_p(\vert \kappa^0_{n,n-1}\vert)=\lambda'-\lambda''+(\mu'-\mu'')\phi(p^n).\]
   Let $F'$ and $F''$ be the characteristic ideals of $\Sel^0(E/K_\infty)^\vee$ and $\mathcal{M}^0(E/K_\infty)$. Choose $n_0$ such that $\gcd(F',\omega_n)=\gcd(F', \omega_{n_0})$ and $\gcd(F'',\omega_n)=\gcd(F'',\omega_{n_0})$ for all $n\ge n_0$.  Let $G'=\gcd(F',\omega_{n_0})$ and $G''=\gcd(F'',\omega_{n_0})$ . Then we have $\lambda'=\lambda(F')-\lambda(G')$ as well as $\lambda''=\lambda(F'')-\lambda(G'')$.  
   As we are assuming that $\sha(E/K_n)$ is finite foll all $n$, $\Sel^0(E/K_n)$ and $\mathcal{M}^0(E/K_n)$ have the same corank for all $n$. Thus, $G'=G''$. This implies $\lambda'-\lambda''=\lambda$ and $\mu'-\mu''=\mu$.
   
\end{proof}

\subsection{Estimating the kernels}
To obtain an asymptotic formula for $\sha(E/K_n)$ we do not only need to understand the cokernels $\sha_{n,n-1}$ but also the kernels of the natural maps $\sha(E/K_{n-1})\to \sha(E/K_n)$.  It turns out that these maps are injetive as we will prove in Proposition \ref{shainj}.
\begin{lemma}
\label{mordel-weil-vareps}
    Assume that $(-1)^n=\varepsilon$. For all $n$ large enough we have that 
    \[\mathcal{M}^{-\varepsilon}(E/K_n)=\mathcal{M}^{-\varepsilon}(E/K_{n-1})+\mathcal{M}^0(E/K_n). \]
\end{lemma}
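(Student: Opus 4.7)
The plan is to decompose any element $x\in\cM^{-\varepsilon}(E/K_n)$ as $x=y+z$ with $y\in\cM^{-\varepsilon}(E/K_{n-1})$ (viewed inside level $n$ via the natural map) and $z\in\cM^0(E/K_n)$; equivalently, to produce a global $y$ at level $n-1$ whose localisation at $p$ agrees with that of $x$. The first thing I would carry out is the local stability: since $(-1)^n=\varepsilon$, the polynomial $\Phi_n(T+1)$ does not appear in $\omega_n^{-\varepsilon}$, so $\omega_n^{-\varepsilon}=\omega_{n-1}^{-\varepsilon}$. Combined with \cite[Theorem 3.2]{BKO24} this gives that $\widehat{E}^{-\varepsilon}(K_{n,p})\otimes\Q_p$ and $\widehat{E}^{-\varepsilon}(K_{n-1,p})\otimes\Q_p$ share the same $\cO$-dimension, so the inclusion $\widehat{E}^{-\varepsilon}(K_{n-1,p})\hookrightarrow\widehat{E}^{-\varepsilon}(K_{n,p})$ has finite cokernel and tensoring with the divisible group $\Q_p/\Z_p$ yields a surjection
\[
\widehat{E}^{-\varepsilon}(K_{n-1,p})\otimes\Q_p/\Z_p\twoheadrightarrow\widehat{E}^{-\varepsilon}(K_{n,p})\otimes\Q_p/\Z_p.
\]

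The main technical obstacle is to upgrade this local surjection to the statement that $\mathrm{loc}_p(x)$ is itself already hit by $\cM^{-\varepsilon}(E/K_{n-1})$, since Corollary \ref{cor.surj} supplies the required global surjectivity only for the sign $\varepsilon$. My plan is an isotypic-component analysis. Under the standing assumption that $\sha(E/K_m)$ is finite for all $m$, the rank jump $\rank(E(K_m))-\rank(E(K_{m-1}))$ vanishes for $n$ large enough at every intermediate level $m\le n$ with $(-1)^m=-\varepsilon$, by the result of Greenberg cited in the Remark following Theorem \ref{Thm:a}. Hence every ``new'' character of $E(K_n)\otimes\Q_p$ beyond $E(K_{n-1})\otimes\Q_p$ is a root of $\omega_n^\varepsilon/\omega_{n-1}^\varepsilon$, which is coprime to $\omega_n^{-\varepsilon}$; but the characters of $\widehat{E}^{-\varepsilon}(K_{n,p})\otimes\Q_p$ are precisely the roots of $\omega_n^{-\varepsilon}$. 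Thus the composite
\[
E(K_n)\otimes\Q_p\longrightarrow\widehat{E}(K_{n,p})\otimes\Q_p\twoheadrightarrow\widehat{E}^{-\varepsilon}(K_{n,p})\otimes\Q_p
\]
factors through $E(K_{n-1})\otimes\Q_p$. Descending to $\Q_p/\Z_p$-coefficients—using torsion-freeness of the formal group at the supersingular prime to avoid spurious torsion contributions—delivers the required lift $y\in\cM^{-\varepsilon}(E/K_{n-1})$ of $\mathrm{loc}_p(x)$.

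Having produced $y$, set $z:=x-y$. By construction $\mathrm{loc}_p(z)=0$ inside $\widehat{E}^{-\varepsilon}(K_{n,p})\otimes\Q_p/\Z_p$, and combining with the direct-sum decomposition $\widehat{E}(K_{n,p})\otimes\Q_p/\Z_p=\bigl(\widehat{E}^+(K_{n,p})\otimes\Q_p/\Z_p\bigr)\oplus\bigl(\widehat{E}^-(K_{n,p})\otimes\Q_p/\Z_p\bigr)$ from \cite[Theorem 3.2]{BKO24} we also obtain $\mathrm{loc}_p(z)=0$ inside $\widehat{E}(K_{n,p})\otimes\Q_p/\Z_p$. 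Thus $z\in\cM^0(E/K_n)$ and $x=y+z$ is the required decomposition. The ``for all $n$ large enough'' hypothesis is exactly what permits the character-concentration step and absorbs the finite error terms that could otherwise arise from low-conductor $(-\varepsilon)$-characters of $E(K_n)\otimes\Q_p$ or from the torsion kernel of the local surjection.
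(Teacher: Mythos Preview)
Your overall strategy matches the paper's: show that any $x\in\cM^{-\varepsilon}(E/K_n)$ decomposes as an element of $\cM^{-\varepsilon}(E/K_{n-1})$ plus one of $\cM^0(E/K_n)$, exploiting that when $(-1)^n=\varepsilon$ the new characters at level $n$ are roots of $\Phi_n(T+1)\mid\omega_n^\varepsilon$ and hence invisible to the $-\varepsilon$-local condition. However, two points need correction.

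First, the appeal to Greenberg's rank result is both unnecessary and, as stated, incorrect: Greenberg only shows the rank jump at level $m$ (with $(-1)^m=-\varepsilon$) vanishes for $m$ sufficiently large, not for \emph{every} $m\le n$. Fortunately you only use the single transition $K_{n-1}\to K_n$ with $(-1)^n=\varepsilon$; there the ``new'' characters are automatically roots of $\Phi_n(T+1)$, coprime to $\omega_n^{-\varepsilon}$, and this holds for all such $n$ with no input from Greenberg.

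Second, and more substantively, your factoring argument only produces $y\in E(K_{n-1})\otimes\Q_p/\Z_p$ with $\pi^{-\varepsilon}(\mathrm{loc}_p(y))=\mathrm{loc}_p(x)$, where $\pi^{-\varepsilon}$ is the projection $\widehat{E}(K_{n,p})\otimes\Q_p/\Z_p\twoheadrightarrow\widehat{E}^{-\varepsilon}(K_{n,p})\otimes\Q_p/\Z_p$. It does \emph{not} give $\mathrm{loc}_p(y)=\mathrm{loc}_p(x)$ in $\widehat{E}(K_{n,p})\otimes\Q_p/\Z_p$: the $\varepsilon$-component of $\mathrm{loc}_p(y)$ may be nonzero, so $y$ need not lie in $\cM^{-\varepsilon}(E/K_{n-1})$, and then $z=x-y$ need not lie in $\cM^0(E/K_n)$. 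Your final deduction ``$\mathrm{loc}_p(z)=0$ in $\widehat{E}^{-\varepsilon}$, hence by the direct-sum decomposition $\mathrm{loc}_p(z)=0$ in $\widehat{E}$'' silently assumes $\mathrm{loc}_p(z)\in\widehat{E}^{-\varepsilon}$, which is precisely what is at issue. The fix is immediate: decompose $y=y_1+y_2$ with $y_1\in\cM^\varepsilon(E/K_{n-1})$, $y_2\in\cM^{-\varepsilon}(E/K_{n-1})$ via Proposition~\ref{prop-selmer-exact} at level $n-1$; then $\pi^{-\varepsilon}(\mathrm{loc}_p(y))=\mathrm{loc}_p(y_2)$, so $y_2$ is the desired lift and $x-y_2\in\cM^0(E/K_n)$.

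The paper's proof avoids both issues by a clean lattice computation: it records $E(K_n)\otimes\Q_p/\Z_p=E(K_{n-1})\otimes\Q_p/\Z_p+\cM^\varepsilon(E/K_n)$, then applies the modular law and the identity $\cM^+\cap\cM^-=\cM^0$ to reach the conclusion in four lines, with no detour through rank stabilisation.
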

\begin{proof}
    We have a natural isomorphism
    \[\frac{E(K_n)\otimes \Q_p/\Z_p}{E(K_{n-1})\otimes \Q_p/\Z_p}\to \frac{\mathcal{M}^\varepsilon(E/K_n)}{\mathcal{M}^{\varepsilon}(E/K_{n-1})}.\]
    Note that $E(K_{n-1})\otimes \Q_p/\Z_p=\mathcal{M}^\varepsilon(E/K_{n-1})+\mathcal{M}^{-\varepsilon}(E/K_{n-1})$. 

    By definition \begin{align*}
        \mathcal{M}^{-\varepsilon}(E/K_n)&= \mathcal{M}^{-\varepsilon}(E/K_n)\cap (E(K_{n-1})\otimes \Q_p/\Z_p+\mathcal{M}^\varepsilon(E/K_n)))\\&=\mathcal{M}^{-\varepsilon}(E/K_{n})\cap (\mathcal{M}^\varepsilon(E/K_n)+\mathcal{M}^{-\varepsilon}(E/K_{n-1}))\\&=\mathcal{M}^{-\varepsilon}(E/K_{n-1})+(\mathcal{M}^{-\varepsilon}(E/K_n)\cap \mathcal{M}^\varepsilon(E/K_n))\\&=\mathcal{M}^{-\varepsilon}(E/K_{n-1})+\mathcal{M}^0(E/K_n). \qedhere
    \end{align*}
   
\end{proof}

\begin{proposition}
\label{shainj}
    The natural homomorphism 
    \[\sha(E/K_{n-1})\to \sha(E/K_n)\]
    is injective for all $n$ large enough.
\end{proposition}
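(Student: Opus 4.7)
The plan is to apply the snake lemma to the commutative diagram of Corollary~\ref{cor.exact.sha} to reduce the statement to the injectivity of $\kappa^{-\varepsilon}(E/K_{n-1})\to\kappa^{-\varepsilon}(E/K_n)$, and then to treat this map by parity of $n$. Writing $a,b,c$ for the vertical maps in that diagram, the snake lemma yields the six-term sequence $0\to\ker a\to\ker b\to\ker c\to\coker a\to\coker b\to\coker c\to 0$; Corollary~\ref{cor:kappa-inj} gives $\ker a=0$, and Proposition~\ref{prop.exact.sha} independently supplies the exact sequence $0\to\coker a\to\coker b\to\coker c\to 0$, which forces the snake connecting map $\ker c\to\coker a$ to vanish. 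Thus $\ker c\cong\ker b$, and Lemma~\ref{lem:kappa-epsilon} identifies this with $\ker\bigl(\kappa^{-\varepsilon}(E/K_{n-1})\to\kappa^{-\varepsilon}(E/K_n)\bigr)$.

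For $n$ large with $(-1)^n=\varepsilon$, Lemma~\ref{mordel-weil-vareps} supplies the decomposition $\mathcal{M}^{-\varepsilon}(E/K_n)=\mathcal{M}^{-\varepsilon}(E/K_{n-1})+\mathcal{M}^0(E/K_n)$. Given $x\in\Sel^{-\varepsilon}(E/K_{n-1})$ with restriction $r_n(x)\in\mathcal{M}^{-\varepsilon}(E/K_n)$, I would write $r_n(x)=y+z$ with $y\in\mathcal{M}^{-\varepsilon}(E/K_{n-1})$ and $z\in\mathcal{M}^0(E/K_n)$. Then $x-y\in\Sel^{-\varepsilon}(E/K_{n-1})$ has trivial local image at $K_{n,p}$; since $\widehat{E}^{-\varepsilon}(K_{n-1,p})\otimes\Q_p/\Z_p\hookrightarrow\widehat{E}^{-\varepsilon}(K_{n,p})\otimes\Q_p/\Z_p$, this forces $x-y\in\Sel^0(E/K_{n-1})$. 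Its restriction $z$ lies in $\mathcal{M}^0(E/K_n)$, so its image in $\kappa^0(E/K_n)$ is trivial; Corollary~\ref{cor:kappa-inj} then forces $x-y\in\mathcal{M}^0(E/K_{n-1})\subset\mathcal{M}^{-\varepsilon}(E/K_{n-1})$, whence $x\in\mathcal{M}^{-\varepsilon}(E/K_{n-1})$.

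For $n$ large with $(-1)^n=-\varepsilon$, the Remark following Theorem~\ref{Thm:a} invokes Greenberg's theorem to give $\rank E(K_n)=\rank E(K_{n-1})$; combined with $E(K_n)[p]=0$, a short $\mathrm{Tor}$-argument upgrades this to an isomorphism $E(K_{n-1})\otimes\Q_p/\Z_p\cong E(K_n)\otimes\Q_p/\Z_p$. Using the direct sum decomposition of $\widehat{E}(K_{n,p})\otimes\Q_p/\Z_p$ together with the injectivity of $\widehat{E}^{\varepsilon}(K_{n-1,p})\otimes\Q_p/\Z_p\to\widehat{E}^{\varepsilon}(K_{n,p})\otimes\Q_p/\Z_p$ (which holds in this parity since $\omega_n^{\varepsilon}=\omega_{n-1}^{\varepsilon}$), a short chase yields $\mathcal{M}^{-\varepsilon}(E/K_{n-1})\cong\mathcal{M}^{-\varepsilon}(E/K_n)$. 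Combined with the injectivity of $\Sel^{-\varepsilon}(E/K_{n-1})\hookrightarrow\Sel^{-\varepsilon}(E/K_n)$ from inflation--restriction, this forces the $\kappa^{-\varepsilon}$-kernel to vanish. The main obstacle is the first parity, where $\mathcal{M}^{-\varepsilon}$ genuinely grows and the Mordell--Weil decomposition of Lemma~\ref{mordel-weil-vareps}, together with the $\kappa^0$-injectivity from Corollary~\ref{cor:kappa-inj}, are the essential inputs.
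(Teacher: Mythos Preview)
Your proof is correct and, for the main case $(-1)^n=\varepsilon$, follows essentially the paper's argument: both use Lemma~\ref{mordel-weil-vareps} to decompose $\mathcal{M}^{-\varepsilon}(E/K_n)$ and then invoke Corollary~\ref{cor:kappa-inj} to push the residual fine-Selmer piece down to level $n-1$. Your reduction in the first paragraph is in fact more carefully justified than the paper's one-line appeal to Corollary~\ref{cor.exact.sha}: you explicitly use Proposition~\ref{prop.exact.sha} to kill the snake connecting map $\ker c\to\coker a$, whereas the paper's phrase ``it suffices to show that $\kappa^\pm$ is injective'' tacitly relies on the same input.

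The genuine difference is the parity $(-1)^n=-\varepsilon$. The paper disposes of this case by citing \cite[Lemma~4.5 and Remark~4.6]{BKO24}. You instead give a self-contained argument: Greenberg's rank stabilization (invoked in the Remark after Theorem~\ref{Thm:a}) forces $E(K_{n-1})\otimes\Q_p/\Z_p\cong E(K_n)\otimes\Q_p/\Z_p$, and the equality $\omega_n^\varepsilon=\omega_{n-1}^\varepsilon$ in this parity makes $\widehat{E}^\varepsilon(K_{n-1,p})\otimes\Q_p/\Z_p\to\widehat{E}^\varepsilon(K_{n,p})\otimes\Q_p/\Z_p$ an isomorphism, whence $\mathcal{M}^{-\varepsilon}(E/K_{n-1})\cong\mathcal{M}^{-\varepsilon}(E/K_n)$ and the $\kappa^{-\varepsilon}$-kernel vanishes. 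Your route trades one external citation for another (Greenberg instead of BKO24) but makes the underlying mechanism transparent; the paper's version is shorter but opaque.
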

\begin{proof}
    If $(-1)^n=-\varepsilon$ this follows from  \cite[Lemma 4.5 and Remark 4.6]{BKO24}. Assume now that $(-1)^n=\varepsilon$. By Corollary \ref{cor.exact.sha} it suffices to show that $\kappa^\pm(E/K_{n-1})\to \kappa^\pm (E/K_n)$ is injective. For the $\varepsilon$ part this is Lemma \ref{lem:kappa-epsilon}. It remains to consider the kernel of $\kappa^{-\varepsilon}(E/K_{n-1})\to \kappa^{-\varepsilon}(E/K_n)$. Consider the natural map
    \[\psi_n^{-\varepsilon}\colon \Sel^{-\varepsilon}(E/K_{n-1})\to \kappa^{-\varepsilon}(E/K_n).\]
    The kernel is given by $\Sel^{-\varepsilon}(E/K_{n-1})\cap \mathcal{M}^{-\varepsilon}(E/K_n)$. By Lemma \ref{mordel-weil-vareps} we know
    \[\mathcal{M}^{-\varepsilon}(E/K_n)=\mathcal{M}^{-\varepsilon}(E/K_{n-1})+\mathcal{M}^0(E/K_n).\]
    Thus,
    \begin{align*} \Sel^{-\varepsilon}(E/K_{n-1})\cap \mathcal{M}^{-\varepsilon}(E/K_n)&=\Sel^{-\varepsilon}(E/K_{n-1})\cap (\mathcal{M}^{-\varepsilon}(E/K_{n-1})+\mathcal{M}^0(E/K_n))\\&=\mathcal{M}^{-\varepsilon}(E/K_{n-1})+(\Sel^{-\varepsilon}(E/K_{n-1})\cap\mathcal{M}^0(E/K_n))\\&=\mathcal{M}^{-\varepsilon}(E/K_{n-1})+(\Sel^{0}(E/K_{n-1})\cap \mathcal{M}^0(E/K_n)).\end{align*}
    Corollary \ref{cor:kappa-inj} implies that $\Sel^0(E/K_{n-1})\cap \mathcal{M}^0(E/K_n)=\mathcal{M}^0(E/K_{n-1})$. Thus,
    \[\mathcal{M}^{-\varepsilon}(E/K_{n-1})+(\Sel^{0}(E/K_{n-1})\cap\mathcal{M}^0(E/K_n))=\mathcal{M}^{-\varepsilon}(E/K_{n-1}).\]
    Therefore $\ker(\psi_n^{-\varepsilon})=\mathcal{M}^{-\varepsilon}(E/K_{n-1})$, which implies that 
    \[\kappa^{-\varepsilon}(E/K_{n-1})\to \kappa^{-\varepsilon}(E/K_n)\]
    is indeed injective. 
\end{proof}
\begin{corollary}
\label{cor:sel} For $n\gg 0$ we have
    \[\Sel(E/K_{n-1})\cap (E(K_n)\otimes \Q_p/\Z_p)=E(K_{n-1})\otimes \Q_p/\Z_p. \]
\end{corollary}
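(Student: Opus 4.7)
The plan is to extract this corollary as an immediate formal consequence of Proposition \ref{shainj}; essentially no new input is required, only a short chase through the definition of $\sha$. The inclusion $E(K_{n-1})\otimes \Q_p/\Z_p \subseteq \Sel(E/K_{n-1})\cap (E(K_n)\otimes \Q_p/\Z_p)$ is tautological, so only the reverse inclusion carries content.

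For the nontrivial direction I would first record that the restriction map $\Sel(E/K_{n-1})\to \Sel(E/K_n)$ is injective: this follows from the inflation--restriction sequence combined with $E(K_n)[p]=0$, a fact already used in the proof of Theorem \ref{control-theorem}. This allows me to view $\Sel(E/K_{n-1})$ unambiguously as a subgroup of $\Sel(E/K_n)$, so that the intersection appearing in the statement is literal. Then, given $x\in \Sel(E/K_{n-1})$ whose image in $\Sel(E/K_n)$ lies in the Kummer image $E(K_n)\otimes \Q_p/\Z_p$, I would pass to the class $\bar x\in \sha(E/K_{n-1})=\Sel(E/K_{n-1})/(E(K_{n-1})\otimes \Q_p/\Z_p)$. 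By construction $\bar x$ has trivial image in $\sha(E/K_n)$, and Proposition \ref{shainj} forces $\bar x=0$ for $n$ large enough, whence $x\in E(K_{n-1})\otimes \Q_p/\Z_p$ as required.

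I do not expect a serious obstacle: the only ingredient beyond Proposition \ref{shainj} is the standard functoriality of the Kummer map and Galois cohomology that makes the diagram relating $\Sel(E/K_{n-1})$, $\Sel(E/K_n)$, $E(K_{n-1})\otimes \Q_p/\Z_p$ and $E(K_n)\otimes \Q_p/\Z_p$ commutative. The corollary is in essence a reformulation of the injectivity of $\sha(E/K_{n-1})\to \sha(E/K_n)$ in terms of Selmer groups, and once that injectivity is available, the rest is bookkeeping.
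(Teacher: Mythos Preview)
Your proposal is correct and matches the paper's treatment: the corollary is stated without proof immediately after Proposition~\ref{shainj}, precisely because it is the reformulation you describe. The argument you spell out---passing to $\sha$ and invoking the injectivity of $\sha(E/K_{n-1})\to\sha(E/K_n)$---is exactly the intended one-line deduction.
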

\subsection{Estimating $\kappa^{-\varepsilon}_{n,n-1}$}
In this section we always assume that $(-1)^n=\varepsilon$. Before we can analyze $\kappa^{-\varepsilon}_{n,n-1}$, we first need the following result on signed Selmer groups.

\begin{lemma} 
\label{lem:new-control}Assume that the $\Sel^0(E/K_n)[\omega_n^{-\varepsilon}]$ is finite for all $n$. 
    The natural maps
    \[\frac{\Sel^{-\varepsilon}(E/K_n)}{\Sel^0(E/K_n)}[\omega_n^{-\varepsilon}]\to \frac{\Sel^{-\varepsilon}(E/K_\infty)}{\Sel^0(E/K_\infty)}[\omega_n^{-\varepsilon}]\]
    are injective with uniformly bounded cokernel. 
\end{lemma}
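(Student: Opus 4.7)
The plan is to deduce the statement from a snake lemma comparison of the short exact sequence
\[0\to \Sel^0(E/K_n)\to \Sel^{-\varepsilon}(E/K_n)\to Q_n\to 0, \qquad Q_n:=\Sel^{-\varepsilon}(E/K_n)/\Sel^0(E/K_n),\]
at the finite level with its counterpart at the infinite level. Writing $\omega:=\omega_n^{-\varepsilon}$ and applying the multiplication-by-$\omega$ snake lemma to each row yields, by functoriality of the connecting homomorphism, a commutative ladder
\[\begin{tikzcd}[font=\small, column sep=.5em, row sep=1em]
0\arrow[r]&\Sel^0(E/K_n)[\omega]\arrow[r]\arrow[d,"a_n"]&\Sel^{-\varepsilon}(E/K_n)[\omega]\arrow[r]\arrow[d,"b_n"]&Q_n[\omega]\arrow[r]\arrow[d,"c_n"]&\Sel^0(E/K_n)/\omega\arrow[d,"d_n"]\\
0\arrow[r]&\Sel^0(E/K_\infty)[\omega]\arrow[r]&\Sel^{-\varepsilon}(E/K_\infty)[\omega]\arrow[r]&Q_\infty[\omega]\arrow[r]&\Sel^0(E/K_\infty)/\omega
\end{tikzcd}\]
in which the goal is to bound the kernel and cokernel of $c_n$ independently of $n$.

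Next I would analyze the outer vertical maps. The map $a_n$ is injective with uniformly bounded cokernel, because Theorem \ref{control-theorem} yields this for $\Sel^0(E/K_n)\to \Sel^0(E/K_\infty)^{\Gamma_n}$ and, since $\omega\mid\omega_n$, passing to $\omega$-torsion does not enlarge the cokernel. The hypothesis that $\Sel^0(E/K_n)[\omega]$ is finite is, via the remark following Theorem \ref{Thm:a}, equivalent to $\Char(\Sel^0(E/K_\infty)^\vee)$ being coprime to $\omega$; after choosing $n$ large so that the relevant gcd stabilises (as in the proof of Theorem \ref{growth-of-kappa0}), the target $\Sel^0(E/K_\infty)/\omega$ and hence the source $\Sel^0(E/K_n)/\omega$ have uniformly bounded order, controlling $d_n$. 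For $b_n$, injectivity follows from inflation--restriction using $E(K_n)[p]=0$, and its cokernel is handled by a ``signed control theorem'' for $\Sel^{-\varepsilon}$ obtained by a further snake lemma comparison of the local--global defining sequences for $\Sel^0$ and $\Sel^{-\varepsilon}$ at the two levels, using the explicit description $(\widehat E^{-\varepsilon}(K_{n,p})\otimes \Q_p/\Z_p)^\vee\cong \omega_n^{\varepsilon}\Lambda_n$ from \cite[Theorem 3.2]{BKO24} to bound the local error terms uniformly.

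With these three inputs in hand, a direct diagram chase in the displayed ladder shows that $c_n$ is injective and that its cokernel is bounded by a combination of the (uniformly bounded) cokernels of $a_n$, $b_n$ and the (uniformly bounded) order of $\Sel^0(E/K_\infty)/\omega$.

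The main obstacle will be the signed control theorem for $\Sel^{-\varepsilon}$ needed to handle $b_n$: the analogue for $\Sel^\varepsilon$ is \cite[Theorem 5.2]{agboolahowardsupersingular}, but the cotorsion sign is not covered there and requires a separate derivation, which I would carry out by a Poitou--Tate snake lemma argument on the defining sequences of $\Sel^0$ and $\Sel^{-\varepsilon}$, feeding in Theorem \ref{control-theorem} for the fine Selmer group as the principal input.
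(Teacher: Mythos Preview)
Your approach is viable but organised differently from the paper. The paper does not go through a signed control theorem for $\Sel^{-\varepsilon}$ at all: instead of the ladder with middle column $\Sel^{-\varepsilon}(E/K_n)[\omega]\to\Sel^{-\varepsilon}(E/K_\infty)[\omega]$, it embeds the quotient $\Sel^{-\varepsilon}/\Sel^0$ into the larger quotient $H^1(K_\Sigma/-, E[p^\infty])/\Sel^0$ and compares \emph{that} at finite and infinite level, with the local quotient $H^1(K_{-,p},E[p^\infty])/(\widehat E^{-\varepsilon}\otimes\Q_p/\Z_p)$ as the third column. The middle map is then handled directly: since $H^1(K_\Sigma/K_n,E[p^\infty])\cong H^1(K_\Sigma/K_\infty,E[p^\infty])^{\Gamma_n}$ on the nose, and the hypothesis makes $\Sel^0(E/K_\infty)/\omega_n^{-\varepsilon}$ uniformly bounded, a single snake-lemma step suffices. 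The local column is injective by the Coleman-map argument of \cite[Theorem~9.2]{kobayashi03} adapted via the maps of \cite{BKO-BDP}. What this buys the paper is that it never has to assemble a control theorem for $\Sel^{-\varepsilon}$ --- the local injectivity is used once, directly, rather than being folded into an auxiliary result.

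Your route would work too, but note that the step you flag as ``the main obstacle'' essentially repackages the same two ingredients (exact control for $H^1$, local injectivity at $p$) that the paper uses directly; so there is no genuine saving, only a reorganisation. One imprecision: you point to the explicit description $(\widehat E^{-\varepsilon}(K_{n,p})\otimes\Q_p/\Z_p)^\vee\cong\omega_n^{\varepsilon}\Lambda_n$ as the input for bounding the local error in your $b_n$, but what is actually needed is the \emph{injectivity} of the restriction map on the local quotient $H^1(K_{n,p},E[p^\infty])/(\widehat E^{-\varepsilon}\otimes\Q_p/\Z_p)$, which does not follow from the static description at each level alone --- it requires the Coleman-map compatibility across $n$, exactly the input the paper cites. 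Also, contrary to your last paragraph, \cite[Theorem~5.2]{agboolahowardsupersingular} is stated for both signs with the matching $\omega_n^\pm$, so you can in fact cite it for $b_n$ rather than rederive it.
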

\begin{proof}
    Consider the exact sequence
\begin{align*}H^1(K_\Sigma/K_\infty,E[p^\infty])[\omega_n^{-\varepsilon}]\to \left(\frac{H^1(K_\Sigma/K_\infty,E[p^\infty])}{\Sel^0(E/K_\infty)}\right)[\omega_n^{-\varepsilon}]\\\to \Sel^0(E/K_\infty)/\omega_n^{-\varepsilon}\Sel^0(E/K_\infty). \end{align*}
If $\Sel^0(E/K_n)[\omega_n^{-\varepsilon}]$ is finite for all $n$, the characteristic ideal of $\Sel^0(E/K_\infty)$ is coprime to $\omega^{-\varepsilon}_n$ for all $n$. In particular, $\Sel^0(E/K_\infty)/\omega_n^{-\varepsilon}\Sel^0(E/K_n)$ is uniformly bounded. It follows that the natural homomorphism
\[H^1(K_\Sigma/K_\infty,E[p^\infty])[\omega_n^{-\varepsilon}]\to \frac{H^1(K_\Sigma/K_\infty,E[p^\infty])}{\Sel^0(E/K_\infty)}[\omega_n^{-\varepsilon}]\]
is injective with uniformly bounded cokernel.
In particular,
\[\frac{H^1(K_\Sigma/K_n,E[p^\infty])}{\Sel^0(E/K_n)}[\omega_n^{-\varepsilon}]\to \frac{H^1(K_\Sigma/K_\infty, E[p^\infty])}{\Sel^0(E/K_\infty)}[\omega_n^{-\varepsilon}]\]
is injective with uniformly bounded cokernel. Consider the following commutative diagram
\[\begin{tikzcd}[font=\small, column sep=1em, row sep=1em]
    0\arrow[r] &\frac{\Sel^{-\varepsilon} (E/K_n)}{\Sel^0(E/K_n)}\arrow[r]\arrow[d]&\frac{H^1(K_\Sigma/K_n,E[p^\infty])}{\Sel^0(E/K_n)}[\omega_n^{-\varepsilon}]\arrow[r]\arrow[d] &\frac{H^1(K_{n,p},E[p^\infty])}{\widehat{E}^{-\varepsilon}(K_{n,p})\otimes \Q_p/\Z_p}[\omega_n^{-\varepsilon}]\arrow[d]\\
     0\arrow[r] &\frac{\Sel^{-\varepsilon} (E/K_\infty)}{\Sel^0(E/K_\infty)}[\omega_n^{-\varepsilon}]\arrow[r]&\frac{H^1(K_\Sigma/K_\infty,E[p^\infty])}{\Sel^0(E/K_\infty)}[\omega_n^{-\varepsilon}]\arrow[r]&\frac{H^1(K_{\infty,p},E[p^\infty])}{\widehat{E}^{-\varepsilon}(K_{\infty,p})\otimes \Q_p/\Z_p}[\omega_n^{-\varepsilon}]
\end{tikzcd}\]
The right vertical map is injective (this can be proved as in \cite[Theorem 9.2]{kobayashi03} using the Coleman maps defined by \cite{BKO-BDP}). The middle vertical map is injective with uniformly bounded cokernel. Thus, the left vertical map is injective with uniformly bounded cokernel. 
\end{proof}
\begin{lemma}
    Assume that $(-1)^n=\varepsilon$ and that $\Sel^0(E/K_n)[\omega_n^{-\varepsilon}]$ is finite for all $n$. Then We have
    \[\Sel^{-\varepsilon}(E/K_n)=\Sel^{-\varepsilon}(E/K_{n-1})+\Sel^0(E/K_n).\]
\end{lemma}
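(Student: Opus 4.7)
Set $A_m:=\Sel^{-\varepsilon}(E/K_m)/\Sel^0(E/K_m)$ for $m\in\{n-1,n\}$ and $A_\infty:=\Sel^{-\varepsilon}(E/K_\infty)/\Sel^0(E/K_\infty)$. The starting point is the parity identity $\omega_n^{-\varepsilon}=\omega_{n-1}^{-\varepsilon}$: when $(-1)^n=\varepsilon$, the cyclotomic factor $\Phi_n(T+1)$ that distinguishes $\omega_n$ from $\omega_{n-1}$ belongs entirely to $\omega_n^\varepsilon$. Combined with the identification $(\widehat{E}^{-\varepsilon}(K_{m,p})\otimes\Q_p/\Z_p)^\vee\cong \Lambda/\omega_m^{-\varepsilon}$ already used in the paper, this forces the natural map $\widehat{E}^{-\varepsilon}(K_{n-1,p})\otimes\Q_p/\Z_p\to \widehat{E}^{-\varepsilon}(K_{n,p})\otimes\Q_p/\Z_p$ to be an isomorphism.

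Since $\Sel^0(E/K_n)$ is precisely the kernel of the localization map at $p$ restricted to $\Sel^{-\varepsilon}(E/K_n)$, the desired equality is equivalent to the coincidence of the images of $\Sel^{-\varepsilon}(E/K_{n-1})$ and $\Sel^{-\varepsilon}(E/K_n)$ inside the common local group $\widehat{E}^{-\varepsilon}(K_{n,p})\otimes\Q_p/\Z_p$; equivalently, the natural map $A_{n-1}\to A_n$ is surjective. Its injectivity follows from the identity $\Sel^{-\varepsilon}(E/K_{n-1})\cap\Sel^0(E/K_n)=\Sel^0(E/K_{n-1})$, which in turn uses the injectivity of the restriction $H^1(K_{n-1,p},E[p^\infty])\hookrightarrow H^1(K_{n,p},E[p^\infty])$ guaranteed by $E(K_n)[p]=0$.

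For surjectivity I would apply Lemma \ref{lem:new-control} at both $m=n$ and $m=n-1$. Combined with $\omega_n^{-\varepsilon}=\omega_{n-1}^{-\varepsilon}$, this fits $A_{n-1}\hookrightarrow A_n$ into a commutative square whose two horizontal arrows embed, with uniformly bounded cokernels, into the same module $A_\infty[\omega_n^{-\varepsilon}]=A_\infty[\omega_{n-1}^{-\varepsilon}]$. The main obstacle is upgrading these uniform bounds to the exact equality $A_{n-1}=A_n$. I expect this to come from a refinement of the proof of Lemma \ref{lem:new-control}: the two cokernels are governed by $\Sel^0(E/K_\infty)/\omega_m^{-\varepsilon}\Sel^0(E/K_\infty)$, which coincide for $m\in\{n-1,n\}$ by the parity identity and are uniformly controlled by the finiteness hypothesis on $\Sel^0(E/K_n)[\omega_n^{-\varepsilon}]$, together with inflation-restriction terms for $\Gamma_m$ acting on the finite group $E(K_\infty)[p^\infty]$. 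A careful diagram chase at the level of $H^1$'s should then identify the images of $A_{n-1}$ and $A_n$ inside $A_\infty[\omega_n^{-\varepsilon}]$, yielding $A_{n-1}=A_n$ and hence the claimed decomposition of $\Sel^{-\varepsilon}(E/K_n)$.
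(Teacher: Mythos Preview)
Your approach is the paper's. The paper's entire proof is three lines: observe $\omega_n^{-\varepsilon}=\omega_{n-1}^{-\varepsilon}$, assert that Lemma~\ref{lem:new-control} then gives $A_n[\omega_n^{-\varepsilon}]=A_{n-1}[\omega_{n-1}^{-\varepsilon}]$, note that each $A_m$ is annihilated by $\omega_m^{-\varepsilon}$, and conclude. You correctly flag that the lemma as \emph{stated} only yields injections with uniformly bounded cokernel into the common target $A_\infty[\omega_n^{-\varepsilon}]$, not an equality; the paper is tacitly invoking the proof of the lemma rather than its statement. Your proposed refinement---that the obstruction is governed by $\Sel^0(E/K_\infty)/\omega_m^{-\varepsilon}\Sel^0(E/K_\infty)$, which depends only on $\omega_m^{-\varepsilon}$ and hence coincides for $m\in\{n-1,n\}$---is exactly the intended mechanism. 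One small correction that should let you close the chase: the paper records $E(K_n)[p]=0$ (see the proof of Theorem~\ref{control-theorem}), so $E(K_\infty)[p^\infty]=0$ and the inflation--restriction contributions vanish identically rather than being merely finite; thus $H^1(K_\Sigma/K_m,E[p^\infty])\cong H^1(K_\Sigma/K_\infty,E[p^\infty])^{\Gamma_m}$ on the nose, and the only remaining discrepancy is the $\Sel^0$ term you already identified.
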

\begin{proof}
As $\omega_n^{-\varepsilon}=\omega_{n-1}^{-\varepsilon}$ Lemma \ref{lem:new-control} implies that \[\frac{\Sel^{-\varepsilon}(E/K_n)}{\Sel^0(E/K_n)}[\omega_n^{-\varepsilon}]=\frac{\Sel^{-\varepsilon}(E/K_{n-1})}{\Sel^0(E/K_{n-1})}[\omega_{n-1}^{-\varepsilon}].\] As $\frac{\Sel^{-\varepsilon}(E/K_m)}{\Sel^0(E/K_m)}$ is annihilated by $\omega_m^{-\varepsilon}$ for all $m$, we obtain that 
\[\frac{\Sel^{-\varepsilon}(E/K_n)}{\Sel^0(E/K_n)}=\frac{\Sel^{-\varepsilon}(E/K_{n-1})}{\Sel^0(E/K_{n-1})}\]
In particular $\Sel^{-\varepsilon}(E/K_n)=\Sel^{-\varepsilon}(E/K_{n-1})+\Sel^0(E/K_n)$. 
\end{proof}
As an immediate corollary we obtain
\begin{corollary} 
\label{cor.other-sign}Assume that $(-1)^n=\varepsilon$ and that $\Sel^0(E/K_n)[\omega_n^{-\varepsilon}]$ is finite for all $n$. 
    The natural homomorphism 
    \[\kappa^{0}_{n,n-1}\to \kappa^{-\varepsilon}_{n,n-1}\]
    is surjective. 
\end{corollary}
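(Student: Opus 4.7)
The statement is essentially a diagram chase built on the previous lemma, so the plan is short. The natural map $\kappa^0(E/K_n)\to\kappa^{-\varepsilon}(E/K_n)$ is induced by the inclusions $\Sel^0(E/K_n)\subset\Sel^{-\varepsilon}(E/K_n)$ and $\mathcal{M}^0(E/K_n)\subset\mathcal{M}^{-\varepsilon}(E/K_n)$, and this descends to the quotients $\kappa^*_{n,n-1}$ because it commutes with the transition maps $\alpha^*_{n,n-1}$. What I need to verify is that this induced map hits every class in $\kappa^{-\varepsilon}_{n,n-1}$.

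Fix a class in $\kappa^{-\varepsilon}_{n,n-1}$, and lift it to some $\sigma\in\Sel^{-\varepsilon}(E/K_n)$. By the preceding lemma (which uses the hypothesis that $\Sel^0(E/K_n)[\omega_n^{-\varepsilon}]$ is finite for all $n$), we may write $\sigma=\sigma_{n-1}+\sigma_0$ with $\sigma_{n-1}\in\Sel^{-\varepsilon}(E/K_{n-1})$ and $\sigma_0\in\Sel^0(E/K_n)$. Since the image of $\Sel^{-\varepsilon}(E/K_{n-1})$ is killed in $\kappa^{-\varepsilon}_{n,n-1}$ by definition, the class of $\sigma$ coincides with the class of $\sigma_0$.

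Now $\sigma_0$ lies in $\Sel^0(E/K_n)$, so its image in $\kappa^{-\varepsilon}(E/K_n)$ is the image of its class $[\sigma_0]\in\kappa^0(E/K_n)$ under the map $\kappa^0(E/K_n)\to\kappa^{-\varepsilon}(E/K_n)$. Composing with the projection $\kappa^{-\varepsilon}(E/K_n)\to\kappa^{-\varepsilon}_{n,n-1}$ and using the commutativity of the induced square, the class of $\sigma_0$ in $\kappa^{-\varepsilon}_{n,n-1}$ equals the image of $[\sigma_0]\in\kappa^0_{n,n-1}$. This proves the required surjectivity.

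There is no real obstacle here beyond bookkeeping: the heavy lifting was done in the previous lemma, which reduced the identification of $\Sel^{-\varepsilon}(E/K_n)$ to the sum $\Sel^{-\varepsilon}(E/K_{n-1})+\Sel^0(E/K_n)$. The only thing to keep straight is that the map on $\kappa^{-\varepsilon}_{n,n-1}$ is well defined (which follows because $\mathcal{M}^0\subset\mathcal{M}^{-\varepsilon}$ and $\Sel^0\subset\Sel^{-\varepsilon}$), and that the decomposition of $\sigma$ allows one to replace a general representative by one that comes from $\Sel^0(E/K_n)$.
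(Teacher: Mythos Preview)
Your argument is correct and is precisely the diagram chase the paper has in mind when it calls this an ``immediate corollary'' of the preceding lemma $\Sel^{-\varepsilon}(E/K_n)=\Sel^{-\varepsilon}(E/K_{n-1})+\Sel^0(E/K_n)$. The paper gives no further details, so your proof simply makes explicit what the author left implicit.
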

\subsection{Estimating $\sha_{n,n-1}$}
In this section we put the results from previous sections togethzer to obtain an estimate for $\sha_{n,n-1}$ and to derive an asymptotic formula for $\sha(E/K_n)$. 
\begin{theorem}
\label{main-thm}
    Assume that $(-1)^n=\varepsilon$ and that $\Sel^0(E/K_n)[\omega_n^{-\varepsilon}]$ is finite for all $n$. Then we have
    \[\vert \sha_{n,n-1}\vert =\vert\kappa^0_{n,n-1}\vert.\]
\end{theorem}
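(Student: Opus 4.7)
The plan is to combine Corollary \ref{reduction-to-other-sign}, which gives $\vert \sha_{n,n-1}\vert = \vert \kappa^{-\varepsilon}_{n,n-1}\vert$, with an isomorphism $\kappa^0_{n,n-1} \cong \kappa^{-\varepsilon}_{n,n-1}$, thereby reducing the statement to a two-step argument. Surjectivity of the natural map $\kappa^0_{n,n-1} \to \kappa^{-\varepsilon}_{n,n-1}$ is exactly Corollary \ref{cor.other-sign} (this is where the hypothesis that $\Sel^0(E/K_n)[\omega_n^{-\varepsilon}]$ is finite enters), so only injectivity remains.

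To prove injectivity, I describe the kernel explicitly: by definition of $\kappa^{-\varepsilon}_{n,n-1}$, an element of $\kappa^0(E/K_n)$ maps to zero precisely when it can be written as a sum of an element of $\Sel^{-\varepsilon}(E/K_{n-1})$ and one of $\mathcal{M}^{-\varepsilon}(E/K_n)$. Hence the kernel of $\kappa^0_{n,n-1} \to \kappa^{-\varepsilon}_{n,n-1}$ is the image in $\kappa^0_{n,n-1}$ of
\[
S := \Sel^0(E/K_n) \cap \bigl(\Sel^{-\varepsilon}(E/K_{n-1}) + \mathcal{M}^{-\varepsilon}(E/K_n)\bigr).
\]
I will show $S \subseteq \Sel^0(E/K_{n-1}) + \mathcal{M}^0(E/K_n)$, since the latter has trivial image in $\kappa^0_{n,n-1}$ by the very definition of $\kappa^0_{n,n-1}$.

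The main tool is Lemma \ref{mordel-weil-vareps}, which is available because $(-1)^n = \varepsilon$. Given $x = y + z \in S$ with $y \in \Sel^{-\varepsilon}(E/K_{n-1})$ and $z \in \mathcal{M}^{-\varepsilon}(E/K_n)$, I use that lemma to decompose $z = z' + z''$ with $z' \in \mathcal{M}^{-\varepsilon}(E/K_{n-1})$ and $z'' \in \mathcal{M}^0(E/K_n)$. Then $x - z'' = y + z'$ lies in $\Sel^{-\varepsilon}(E/K_{n-1})$, and also in $\Sel^0(E/K_n)$ (since both $x$ and $z''$ do). The intersection $\Sel^{-\varepsilon}(E/K_{n-1}) \cap \Sel^0(E/K_n)$ equals $\Sel^0(E/K_{n-1})$: a class over $K_{n-1}$ whose restriction at $p$ vanishes over $K_{n,p}$ already vanishes over $K_{n-1,p}$, by the injectivity of local restriction used implicitly in the proof of Lemma \ref{lem:Phi_n-inj}. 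Thus $x - z'' \in \Sel^0(E/K_{n-1})$ and $x \in \Sel^0(E/K_{n-1}) + \mathcal{M}^0(E/K_n)$, as required.

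The main obstacle is that, unlike for the $\varepsilon$-sign where Corollary \ref{cor.surj} provides a strong surjectivity onto the local term $\widehat{E}^\varepsilon(K_{n,p}) \otimes \Q_p/\Z_p$, no such statement holds for the $-\varepsilon$-sign. Consequently the clean argument of Lemma \ref{lem:Phi_n-inj} cannot be copied verbatim; one must instead rely on Lemma \ref{mordel-weil-vareps} to break the element $z$ into a part coming from $K_{n-1}$ and a fine part. Once this substitution is identified, the proof is a short manipulation in the Selmer-subgroup lattice of $H^1(K_\Sigma/K_n, E[p^\infty])$.
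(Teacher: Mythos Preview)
Your proof is correct, but the route differs from the paper's. The paper does not attempt to show that $\kappa^0_{n,n-1}\to\kappa^{-\varepsilon}_{n,n-1}$ is injective; instead it sandwiches $|\kappa^0_{n,n-1}|$ between two bounds coming from the two signs. The inequality $|\sha_{n,n-1}|\le|\kappa^0_{n,n-1}|$ is your step (Corollaries~\ref{reduction-to-other-sign} and~\ref{cor.other-sign}), while the reverse inequality is obtained from the chain of injections $\kappa^0_{n,n-1}\hookrightarrow\kappa^\varepsilon_{n,n-1}\hookrightarrow\sha_{n,n-1}$ furnished by Lemma~\ref{lem:Phi_n-inj} and Lemma~\ref{the-right-sign}, i.e.\ by passing through the $\varepsilon$-side. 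Your argument stays entirely on the $-\varepsilon$-side and instead proves injectivity of $\kappa^0_{n,n-1}\to\kappa^{-\varepsilon}_{n,n-1}$ directly via Lemma~\ref{mordel-weil-vareps}; the lattice manipulation you carry out is essentially the one already appearing in the proof of Proposition~\ref{shainj}. The paper's version is marginally shorter since it simply reuses Lemma~\ref{the-right-sign} rather than repeating a subgroup computation, whereas your version has the merit of producing an explicit isomorphism $\kappa^0_{n,n-1}\cong\kappa^{-\varepsilon}_{n,n-1}$ rather than just equality of orders. One small caveat: Lemma~\ref{mordel-weil-vareps} is stated only for $n$ large enough, so your argument formally inherits that restriction; this is harmless for the application to the asymptotic formula.
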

\begin{proof}
    By Corollarys \ref{reduction-to-other-sign} and \ref{cor.other-sign} we obtain
    \[\vert \sha_{n,n-1}\vert \le \vert \kappa^0_{n,n-1}\vert.\]
    On the other hand Lemmas \ref{lem:Phi_n-inj} and \ref{the-right-sign} imply that there is a chain of injective homomorphisms
    \[\kappa^0_{n,n-1}\to \kappa^\varepsilon_{n,n-1}\to \sha_{n,n-1},\]
    which implies 
    \[\vert \kappa^0_{n,n-1}\vert \le \vert \sha_{n,n-1}\vert.\]
\end{proof}

As a direct consequence of the above analysis we obtain

\begin{theorem} Assume that $\Sel^0(E/K_n)[\omega_n^{-\varepsilon}]$ is finite for all $n$. 
    For all $n$ large enough we have
    \begin{align*}&v_p(\vert\sha(E/K_n)\vert)\\&=\begin{cases}
        \mu^{-\varepsilon}\sum_{m\le n, (-1)^m=-\varepsilon}\phi(p^m)+\mu^{\varepsilon}\sum_{m\le n, (-1)^m=\varepsilon}\phi(p^m)\\{}+\lambda^\varepsilon\floor{\frac{n}{2}}+\lambda^{-\varepsilon}\floor{\frac{n-1}{2}}&(-1)^n=-\varepsilon\\
         \\\mu^{-\varepsilon}\sum_{m\le n, (-1)^m=-\varepsilon}\phi(p^m)+\mu^{\varepsilon}\sum_{m\le n, (-1)^m=\varepsilon}\phi(p^m)\\{}+\lambda^\varepsilon\floor{\frac{n-1}{2}}+\lambda^{-\varepsilon}\floor{\frac{n}{2}}&(-1)^n=\varepsilon
    \end{cases}\end{align*}
\end{theorem}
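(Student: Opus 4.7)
The plan is a telescoping argument assembling the pieces developed in the paper. By Proposition \ref{shainj}, the maps $\sha(E/K_{n-1}) \hookrightarrow \sha(E/K_n)$ are injective for $n \gg 0$, so
\[v_p(|\sha(E/K_n)|) = v_p(|\sha(E/K_{n_0})|) + \sum_{m = n_0 + 1}^{n} v_p(|\sha_{m,m-1}|)\]
for any sufficiently large $n_0$ and every $n \ge n_0$.

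I would then evaluate each summand according to the parity of $m$ relative to $\varepsilon$. If $(-1)^m = \varepsilon$, Theorem \ref{main-thm} identifies $|\sha_{m,m-1}|$ with $|\kappa^0_{m,m-1}|$ and Theorem \ref{growth-of-kappa0} supplies $v_p(|\kappa^0_{m,m-1}|) = \lambda^\varepsilon + \mu^\varepsilon \phi(p^m)$ for $m \gg 0$, where $\lambda^\varepsilon$ and $\mu^\varepsilon$ are the invariants of Theorem \ref{Thm:a}. If instead $(-1)^m = -\varepsilon$, the Burungale--Kobayashi--Ota formula \eqref{bko} gives $v_p(|\sha_{m,m-1}|) = \lambda^{-\varepsilon} + \mu^{-\varepsilon} \phi(p^m)$ for $m \gg 0$, with $\lambda^{-\varepsilon}, \mu^{-\varepsilon}$ the invariants from \eqref{bko}. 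Substituting into the telescoping identity and grouping by parity yields
\[v_p(|\sha(E/K_n)|) = C + \mu^\varepsilon \sum_{\substack{n_0 < m \le n \\ (-1)^m = \varepsilon}} \phi(p^m) + \mu^{-\varepsilon} \sum_{\substack{n_0 < m \le n \\ (-1)^m = -\varepsilon}} \phi(p^m) + \lambda^\varepsilon A^\varepsilon_n + \lambda^{-\varepsilon} A^{-\varepsilon}_n,\]
where $C$ is independent of $n$ and $A^{\pm\varepsilon}_n$ counts the indices $m \in (n_0, n]$ with $(-1)^m = \pm\varepsilon$.

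The remainder is pure bookkeeping. The initial tails $\sum_{m \le n_0,\, (-1)^m = \pm\varepsilon} \phi(p^m)$ are constants that I absorb into $C$, thereby extending each of the two $\phi$-sums to all $m \le n$ as written in the statement. A direct parity count shows that, up to a constant depending only on $n_0$, one has $A^\varepsilon_n = \lfloor n/2 \rfloor$ and $A^{-\varepsilon}_n = \lfloor (n-1)/2 \rfloor$ precisely when $(-1)^n = -\varepsilon$, and the two floors are swapped when $(-1)^n = \varepsilon$; this follows because the larger floor records the parity class of the last step $m = n$. Absorbing the remaining additive constants into the normalization of the invariants gives the two cases of the theorem. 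The only real subtlety is keeping the parity conventions straight through the telescoping; all nontrivial arithmetic input has already been isolated in Theorem \ref{main-thm}, Theorem \ref{growth-of-kappa0}, and \eqref{bko}.
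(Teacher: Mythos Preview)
Your approach is precisely the paper's own: the paper records the result as a direct consequence of Proposition \ref{shainj}, Theorem \ref{main-thm}, Theorem \ref{growth-of-kappa0}, and the Burungale--Kobayashi--Ota formula \eqref{bko}, assembled by exactly the telescoping you spell out. One small caveat: the invariants $\lambda^{\pm\varepsilon},\mu^{\pm\varepsilon}$ are already fixed by their definitions, so the residual additive constant cannot be ``absorbed into the normalization of the invariants'' and should survive as a $\nu$-term (which the displayed formula in the paper appears to omit).
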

\begin{proof}
    This is a direct consequence of \cite[Theorem 1.1]{BKO}, Theorem \ref{main-thm}, Proposition \ref{shainj} and Theorem \ref{growth-of-kappa0}.
\end{proof}

\bibliographystyle{amsalpha}
\bibliography{references}
\end{document}